\newcommand{\red}{\textcolor{red}}
\newcommand{\CC}{\mathbb{C}}
\newcommand{\RR}{\mathbb{R}}
\newcommand{\VV}{\mathcal{V}}
\newcommand{\rank}{\textup{rank}\,}
\newcommand{\supp}{\textup{supp}}
\newcommand{\conv}{\textup{conv}}
\newcommand{\xx}{\mathbf x}
\newcommand{\yy}{\mathbf y}
\newcommand{\s}{\mathbf s}
\newcommand{\mb}[1]{\boldsymbol #1}
\newtheorem{theorem}{Theorem}[section]
\newtheorem{lemma}[theorem]{Lemma}
\newtheorem{corollary}[theorem]{Corollary}
\theoremstyle{definition}
\newtheorem{definition}[theorem]{Definition}
\newtheorem{example}[theorem]{Example}
\theoremstyle{remark}
\newtheorem{remark}[theorem]{Remark}
\title{The Slack  Realization Space of a Polytope}
\author[Gouveia]{Jo{\~a}o Gouveia}
\address{CMUC, Department of Mathematics,
  University of Coimbra, 3001-454 Coimbra, Portugal}
\email{jgouveia@mat.uc.pt}
\author[Macchia]{Antonio Macchia}
\address{Discrete Geometry Group,  
Freie Universit\"at Berlin, 
Arnimallee 2, 
14195 Berlin, Germany}
\email{macchia.antonello@gmail.com}
\author[Thomas]{Rekha R. Thomas}
\address{Department of Mathematics, University of Washington, Box
  354350, Seattle, WA 98195, USA} \email{rrthomas@uw.edu}
\author[Wiebe]{Amy Wiebe}
\address{Department of Mathematics, University of Washington, Box
  354350, Seattle, WA 98195, USA} \email{awiebe@uw.edu}
\thanks{Gouveia was partially supported by the Centre for Mathematics of the
University of Coimbra -- UID/MAT/00324/2013, funded by the Portuguese
Government through FCT/MEC and co-funded by the European Regional Development Fund through the Partnership Agreement PT2020, Macchia was supported by INdAM, Thomas by
the U.S. National Science Foundation grant DMS-1418728, and Wiebe by NSERC}
\date{\today}
\begin{document}

\begin{abstract}
In this paper we introduce a natural model 
for the realization space of a polytope up to projective equivalence which we call the slack realization space of the polytope. 
The model arises from the positive part of an algebraic variety 
determined by the slack ideal of the polytope. This is a saturated
determinantal ideal that encodes the
combinatorics of the polytope. We also derive a new model of 
the realization space of a polytope from the positive part of the variety of a related ideal.
The slack ideal offers an effective 
computational framework for several classical questions about
polytopes such as rational realizability, non-prescribability of faces, and realizability of
combinatorial polytopes.
\end{abstract}

\keywords{polytopes; slack matrix; slack ideal; realization spaces; realizability; rational realizability}

\maketitle


%
%


\section{Introduction} \label{sec:introduction}

An important focus in the study of polytopes is the investigation of their realization spaces. Given a $d$-polytope $P \subset \RR^d$, its face lattice determines its combinatorial type. A realization space of $P$ is, roughly speaking,  the set of all geometric realizations of the combinatorial type of $P$.
This set, usually defined by fixing an affinely independent set of vertices in every realization of $P$,  is a primary basic semialgebraic set, meaning that it is defined by a finite set of polynomial equations and strict inequalities.

Foundational questions about polytopes such as whether there is a polytope with rational vertices
in the combinatorial class of $P$, whether a combinatorial type has any realization at all
as a convex polytope, or whether faces of a polytope can be freely prescribed, are all questions about realization spaces. In general, many of these questions are hard to settle and there is no straightforward way to answer them by working directly with realization spaces. Each instance of such a question often requires a clever new strategy; indeed, the polytope literature contains many ingenious methods to find the desired answers.

In this paper, we introduce {a model for the realization space of a polytope in a given combinatorial class} modulo projective transformations.  This space arises from the positive part of an algebraic variety called
the {\em slack variety} of the polytope. {An explicit model for the realization space of }the projective equivalence classes of a polytope does not exist in the literature, although several authors have implicitly worked modulo projective transformations \cite{AP17,APT15,R96}. 
Using a related idea, we also construct a {model for the} realization space for a polytope that is rationally equivalent to the classical {model for the} realization space of the polytope.
The ideal giving rise to the slack variety is called the {\em slack ideal} of the polytope and was introduced in \cite{GPRT17}. The slack ideal in turn was inspired by the {\em slack matrix} of a polytope. This is a nonnegative real matrix with rows (and columns) indexed by the vertices (and facets) of the polytope and with $(i,j)$-entry equal to the slack of the $i$th vertex in the $j$th facet inequality. Each vertex/facet representation of a $d$-polytope $P$ gives rise to a slack matrix $S_P$ of rank $d+1$.
Slack matrices have found remarkable use in the theory of extended formulations of polytopes (see for example, \cite{Yannakakis}, \cite{FPTdW}, \cite{Rothvoss}, \cite{GPT2}, \cite{LeeSDP}). 
Their utility in creating a realization space model for polytopes was also observed in \cite{D14}.

\subsection{Our contribution}
By passing to a symbolic version of the slack matrix $S_P$, wherein we replace every positive entry by a distinct variable in the vector of variables $\xx$, one gets a symbolic matrix $S_P(\xx)$. The slack ideal $I_P$ is the ideal obtained by saturating the ideal of $(d+2)$-minors of $S_P(\xx)$ with respect to all variables. The complex variety of $I_P$,
$\mathcal{V}(I_P)$, is the slack variety of $P$. {We prove that modulo a group action, the positive part of $\mathcal{V}(I_P)$  is a realization space for the projective equivalence classes of polytopes that are combinatorially equivalent to $P$. This is the slack realization space of $P$ and it provides a new model for the realizations of a polytope modulo projective transformations. Working with a slightly modified ideal called the {\em affine slack ideal} of $P$, we also obtain a realization space for $P$ that is rationally
equivalent to the classical realization space of $P$. We call this the {\em affine slack realization space} of $P$.} By the positive part of a complex variety we mean the intersection of the variety with the positive real orthant of the ambient space.

The slack realization space has several nice features. The inequalities in its description are simply nonnegativities of variables in place of the determinantal inequalities in the classical model. By forgetting these inequalities one can study the entire slack variety, which is a natural algebraic relaxation of the realization space.
The slack realization space naturally mods out affine equivalence among polytopes and, unlike in the classical construction, does not depend on a choice of affine basis. The construction leads to a natural way to study polytopes up to projective equivalence. Further, it serves as a 
realization space for both the polytope it was constructed from as well as the polar of the polytope.

Additionally, the slack ideal provides a computational engine for establishing several types of results one can ask about the combinatorial class of a polytope. We exhibit three concrete applications
of this machinery to determine non-rationality, non-prescribability of faces, and non-realizability of polytopes.
We expect that further applications and questions on the important and difficult topic of realization spaces will be amenable to 
our algebraic geometry based approach.

\subsection{Organization of the paper}
In Section~\ref{sec:bg}, we summarize the results on slack matrices needed in this paper. We also define the slack ideal and affine slack ideal of a polytope.
In Section~\ref{sec:RealSp}, we construct the slack and affine slack realization spaces of a polytope. We show that the
affine slack realization space is rationally equivalent to the classical realization space of the polytope.
In Section~\ref{sec:Apps} we illustrate how the slack ideal provides a computational framework for many classical questions about polytopes such as convex realizability of combinatorial polytopes, rationality, and prescribability of faces.

\subsection{Acknowledgements} We thank Arnau Padrol and G\"unter Ziegler for helpful pointers to the literature and valuable comments on the first draft of this paper.  The \texttt{SageMath} and \texttt{Macaulay2} software systems were invaluable in the development of the results below. All computations described in this paper were done with one of these two systems \cite{SageMath}, \cite{M2}.


%
%

\section{Background: Slack Matrices and Ideals of Polytopes} \label{sec:bg}

In this section we first present several known results about slack matrices of polytopes needed in this paper.  Many of these results come from \cite{slackmatrixpaper}. We then recall the slack ideal of a polytope from \cite{GPRT17} which will be our main computational engine.  While much of this section is background, we also present new objects and results that play an important role in later sections.

Suppose we are given a polytope $P \subset \RR^d$ with { with vertices labelled $1,\ldots, v$ and facet inequalities labelled $1,\ldots, f$.}
Assume that $P$ is a $d$-polytope, meaning that $\dim(P)=d$.
Recall that $P$ has two usual representations: a $\VV$-representation $P = \conv\{\mb{p}_1,\ldots, \mb{p}_v\}$ as the convex hull of vertices, and an $\mathcal{H}$-representation $P = \{\xx\in\RR^d : W\xx \leq \mb{w}\}$ as the common intersection of the half spaces defined by the facet inequalities $W_j \xx \leq {w}_j$, $j=1,\ldots, f$, where $W_j$ denotes the $j$th row of $W \in \RR^{f \times d}$.
Let $V \in \RR^{v \times d}$ be the matrix with rows ${\mb{p}_1}^\top,\ldots, {\mb{p}_v}^\top$,  and let
$\mathbbm{1}$ denote a vector (of appropriate size) with all entries equal to $1$. Then
the combined data of the two representations yields a {\em slack matrix} of $P$, defined as

\begin{equation} \label{EQ:slackdef} S_P := \left[\begin{array}{cc} \mathbbm{1} & V\\
 \end{array}\right] \left[\begin{array}{c} \mb{w}^\top \\ -W^\top \end{array}\right] \in\RR^{v\times f}. \end{equation}

 The name comes from the fact that the $(i,j)$-entry of $S_P$ is ${w}_j - W_j\mb{p}_i$ which is the {\em slack} of the
 $i$th vertex $\mb{p}_i$ of $P$ with respect to the $j$th facet inequality $W_j \xx \leq w_j$ of $P$. Since $P$ is a $d$-polytope, $\textup{rank}(\left[\begin{array}{cc} \mathbbm{1} & V\\
 \end{array}\right]) =  \rank(\begin{bmatrix} \mb{w} & -W\end{bmatrix}) = d+1$, and hence, $\rank(S_P) = d+1$. Also, $\mathbbm{1}$ is in the column span of $S_P$. While the $\mathcal{V}$-representation of $P$ is unique, the  $\mathcal{H}$-representation is not, as each facet inequality $W_j\xx \leq {w}_j$ is equivalent to the scaled inequality $\lambda W_j\xx \leq \lambda {w}_j \text{ for }\lambda >0$,
and hence $P$ has infinitely many slack matrices obtained by positive scalings of the columns of $S_P$.
 Let $D_t$ denote a diagonal matrix of size $t \times t$ with all positive diagonal entries.
 Then all slack matrices of $P$  are of the form $S_P D_f$ for some $D_f$.

A polytope $Q$ is {\em affinely equivalent} to $P$ if there exists an invertible affine transformation $\psi$
such that $Q = \psi(P)$ {and $\psi$ respects the vertex labelling; that is $\psi(\mb{p}_i) = \mb{q}_i$}. If $Q$ is affinely equivalent to $P$, then $S_P$ is a slack matrix of $Q$ and thus $P$ and $Q$ have the same slack matrices (see Example~\ref{EG:quadslack}).
In fact, a slack matrix of $P$ offers a representation of the affine equivalence class of $P$ by the following result.

\begin{lemma}[{\cite[Theorem~14]{slackmatrixpaper}}] If $S$ is any slack matrix of $P$, then the
polytope $Q = \conv(\textup{rows}(S)),$ is
affinely equivalent to $P$. \label{LEM:rowrealiz} \end{lemma}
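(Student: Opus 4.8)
The plan is to exhibit the passage from $P$ to $Q$ as one explicit injective affine map, using the fact (recorded just above) that every slack matrix of $P$ has the form $S_P D_f$ for a positive diagonal $D_f$. Fix the two representations $P = \conv\{\mathbf p_1,\dots,\mathbf p_v\} = \{\xx\in\RR^d : W\xx\le \mathbf w\}$ used to build $S_P = [\,\mathbbm{1}\ V\,]\left[\begin{smallmatrix}\mathbf w^\top\\-W^\top\end{smallmatrix}\right]$, and write the given slack matrix as $S = S_P D_f$. Define the affine map $\phi\colon\RR^d\to\RR^f$ by $\phi(\xx) = D_f(\mathbf w - W\xx)$ (viewing the output as a column vector). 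A one-line computation identifies the rows of $S$ with the image points: the $i$th row of $S_P$ is $[\,1\ \mathbf p_i^\top\,]\left[\begin{smallmatrix}\mathbf w^\top\\-W^\top\end{smallmatrix}\right] = (\mathbf w - W\mathbf p_i)^\top$, so the $i$th row of $S = S_P D_f$ is $(\mathbf w - W\mathbf p_i)^\top D_f = \phi(\mathbf p_i)^\top$.

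Next I would check that $\phi$ is injective. Its linear part is $\xx\mapsto -D_f W\xx$, and since $D_f$ is invertible this is injective as soon as $W$ has rank $d$. That holds because $P$ is a $d$-polytope: if $\ker W$ were nonzero then $P$ would contain a full line and hence be unbounded, a contradiction; so $\rank W = d$. Consequently $\phi$ restricts to an affine isomorphism from $\RR^d = \textup{aff}(P)$ onto the $d$-dimensional affine subspace $\phi(\RR^d) = \textup{aff}(Q)$ of $\RR^f$, with affine inverse. Since affine maps commute with taking convex hulls, $Q = \conv(\textup{rows}(S)) = \conv\{\phi(\mathbf p_1),\dots,\phi(\mathbf p_v)\} = \phi(P)$, which is exactly the asserted affine equivalence of $P$ and $Q$ (and in particular $Q$ is again a $d$-polytope, not a lower-dimensional projection).

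The only genuinely delicate points here are bookkeeping and interpretational rather than substantive: one must confirm $\rank W = d$ so that $\phi$ does not collapse dimensions, and one must read "affinely equivalent" for polytopes in spaces of possibly different dimension as "related by an affine map that is a bijection of $P$ onto $Q$ whose restriction has an affine inverse" — equivalently, an affine isomorphism of their affine hulls. An alternative route, closer in spirit to factoring the matrix, would be to use $\rank S = d+1$ together with $\mathbbm{1}\in\textup{colspan}(S)$ to write $S = [\,\mathbbm{1}\ V'\,]B$ with $B$ of size $(d+1)\times f$ and rank $d+1$, so that $\xx\mapsto[\,1\ \xx^\top\,]B$ is an injective affine map carrying $\conv(\textup{rows}([\,\mathbbm{1}\ V'\,]))$ onto $Q$; but this only reduces the claim to recognizing $[\,\mathbbm{1}\ V'\,]$ as encoding a polytope affinely equivalent to $P$, so the direct computation with $\phi$ above is the cleaner path and I expect no real obstacle beyond verifying the rank condition.
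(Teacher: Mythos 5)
Your argument is correct: writing $S = S_P D_f$, identifying the rows of $S$ as the images of the vertices under the affine map $\phi(\xx)=D_f(\mathbf w - W\xx)$, and checking injectivity via $\rank W = d$ (which follows from boundedness of the $d$-polytope $P$) gives exactly the claimed affine equivalence $Q=\phi(P)$. The paper itself does not prove this lemma but cites it from \cite{slackmatrixpaper}; your proof is the natural direct argument and fills that role without any gap.
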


By the above discussion, we may translate $P$ so that
$0\in\text{int}(P)$ without changing its slack matrices.
Subsequently, we may scale facet inequalities to set $\mb{w} = \mathbbm{1}$.
Then the affine equivalence class of $P$ can be associated to the slack matrix
\begin{equation} \label{EQ:slackrepdef} S^1_P = [\mathbbm{1}\; V]\left[\begin{array}{c}\mathbbm{1}\\ -W^\top\end{array}\right]
\end{equation}
which has the special feature that the all-ones vector of the appropriate size is present in both its row space and column space.
Again this matrix is not unique as it depends on the position of $0 \in \textup{int}(P)$.

 Recall that the polar of $P$ is $P^\circ  = \{ \yy \in (\RR^d)^\ast \,:\, \langle \xx, \yy \rangle \leq 1 \,\,\,\forall \,\, \xx \in P \}$. Under the assumption that $0\in\text{int}(P)$ and that $\mb{w} = \mathbbm{1}$, $P^\circ$ is again a polytope with $0$ in its interior and representations \cite[Theorem 2.11]{Ziegler}:
 $$P^\circ = \conv\{ W_1^\top, \ldots, W_f^\top\} = \{ \yy \in (\RR^d)^\ast \,:\, V \yy \leq \mathbbm{1} \}.$$
This implies that $(S_P^1)^\top$ is a slack matrix of $P^\circ$ and all slack matrices of $P^\circ$ are of the form $(D_v S_P^1)^\top$.

We now pass from the fixed polytope $P$ to its combinatorial class. Note that the zero-pattern in a slack matrix of $P$, or equivalently, the support of $S_P$, encodes the vertex-facet incidence structure of $P$, and hence the entire combinatorics (face lattice) of $P$. 
A labelled polytope $Q$ is {\em combinatorially equivalent} to $P$ if
$P$ and $Q$ have {the same face lattice 
under the identification of vertex $\mb{p}_i$ in $P$ with vertex $\mb{q}_i$ in $Q$ and the identification of facet inequality $f_j$ in $P$ with facet inequality $g_j$ in $Q$. }
The {\em combinatorial class} of $P$ is the set of all labelled polytopes that are combinatorially equivalent to $P$.
A {\em realization} of $P$ is a polytope $Q$, embedded in some $\RR^k$, that is combinatorially equivalent to $P$.
By our labelling assumptions, all realizations of $P$ have slack matrices with the same support as $S_P$.
Further, since each realization $Q$ of $P$ is again a $d$-polytope, all its slack matrices have rank $d+1$ and contain $\mathbbm{1}$ in their column span. Interestingly, the converse is also true and is a consequence of \cite[Theorem 22]{slackmatrixpaper}.

\begin{theorem}
A nonnegative matrix $S$ is a slack matrix of some realization of the {labelled} $d$-polytope $P$ if and only if all of the following hold:
\begin{enumerate}
\item $\textup{supp}(S) = \textup{supp}(S_P)$ \label{EQ:support}
\item $\rank(S) = \rank(S_P) = d+1$ \label{EQ:rank}
\item $\mathbbm{1}$ lies in the column span of $S$. \label{EQ:colspan}
\end{enumerate}
\label{THM:slackconditions}\end{theorem}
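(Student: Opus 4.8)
The plan is to handle the two implications separately. The forward direction is essentially already contained in the discussion above, so I would simply assemble it: if $S$ is a slack matrix of a realization $Q$ of $P$, then $(S)_{ij}=0$ exactly when the $i$th vertex of $Q$ lies on the $j$th facet of $Q$; since $Q$ is combinatorially equivalent to $P$ under the fixed labelling, these incidences agree with those of $P$, which gives \eqref{EQ:support}. As $Q$ is again a $d$-polytope, every slack matrix of $Q$ has rank $d+1$, giving \eqref{EQ:rank}. Finally, since $S$ is a slack matrix of $Q$ we may write $S=[\,\mathbbm{1}\ \ V_Q\,]M_Q$ as in \eqref{EQ:slackdef}, and the factor $M_Q\in\RR^{(d+1)\times f}$ must have rank $d+1$ (because $\rank(S)=d+1$ and $[\,\mathbbm{1}\ \ V_Q\,]$ has full column rank), so its columns span $\RR^{d+1}$, whence $\textup{colspan}(S)=\textup{colspan}([\,\mathbbm{1}\ \ V_Q\,])\ni\mathbbm{1}$, giving \eqref{EQ:colspan}.

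For the converse, suppose $S\ge 0$ satisfies \eqref{EQ:support}--\eqref{EQ:colspan}. Since $\rank(S)=d+1$ I would factor $S=GM$ with $G\in\RR^{v\times(d+1)}$ and $M\in\RR^{(d+1)\times f}$, both of rank $d+1$; because $\mathbbm{1}\in\textup{colspan}(S)=\textup{colspan}(G)$, a change of basis (absorbed into $M$) lets me take the first column of $G$ to be $\mathbbm{1}$, say $G=[\,\mathbbm{1}\ \ V\,]$. Write $\mb p_i^\top$ for the $i$th row of $V$, write the first row of $M$ as $\mb c^\top$ with $\mb c\in\RR^f$ and the remaining rows as those of $-N^\top$ with $N\in\RR^{d\times f}$ of columns $N_1,\dots,N_f$, and set $Q:=\conv\{\mb p_1,\dots,\mb p_v\}$. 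Since $\rank([\,\mathbbm{1}\ \ V\,])=d+1$, the points $\mb p_i$ affinely span a $d$-dimensional space, so $Q$ is a $d$-polytope; and since $(S)_{ij}=c_j-\langle N_j,\mb p_i\rangle$, nonnegativity of $S$ says each $\mb p_i$ satisfies $\langle N_j,\xx\rangle\le c_j$, while \eqref{EQ:support} says equality holds precisely for the pairs $(i,j)$ for which vertex $i$ of $P$ lies on facet $j$ of $P$.

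What remains — and this is the heart of the argument — is to show that $\mb p_1,\dots,\mb p_v$ are exactly the vertices of $Q$ and that $\langle N_j,\xx\rangle\le c_j$, $j=1,\dots,f$, are exactly its facet-defining inequalities, equivalently that $Q=\{\xx:N^\top\xx\le\mb c\}$. For this I would invoke \cite[Theorem~22]{slackmatrixpaper}, which characterizes the nonnegative matrices arising as slack matrices of polytopes: conditions \eqref{EQ:rank} and \eqref{EQ:colspan} supply its linear-algebraic content, and \eqref{EQ:support} supplies the combinatorial content, since the support in question is that of the genuine polytope $P$ and is therefore an honest vertex--facet incidence pattern. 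Granting this, $S$ is literally a slack matrix of the $d$-polytope $Q$; and because the face lattice of a polytope is determined by its vertex--facet incidences \cite{Joswig}, condition \eqref{EQ:support} then forces $Q$ to be combinatorially equivalent to $P$ with the matched labelling, i.e.\ a realization of $P$. I expect this last step to be the only genuine obstacle: equality of supports and ranks does not by itself prevent one of the listed inequalities from being redundant for $Q$, or one of the listed points $\mb p_i$ from lying in the relative interior of a face of $Q$ rather than being a vertex, and ruling out such degeneracies is exactly the work performed by \cite[Theorem~22]{slackmatrixpaper}.
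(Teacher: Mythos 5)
Your proposal is correct and matches the paper's treatment: the paper gives no independent proof of this theorem, deriving the forward direction from the preceding discussion and stating that the converse ``is a consequence of \cite[Theorem 22]{slackmatrixpaper}'', exactly the citation you invoke for the heart of your argument. Your additional setup (the factorization $S=GM$ and the identification of the candidate polytope $Q$) is a reasonable elaboration but does not change the essential route.
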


This theorem will play a central role in this paper. It allows us to identify the combinatorial class of $P$
with the set of nonnegative matrices having the three listed properties.

A polytope $Q$ is {\em projectively equivalent} to $P$ if there exists a projective transformation
$\phi$ such that $Q = \phi(P)$ {and $\phi$ respects the vertex labelling; that is, $\phi(\mb{p}_i) = \mb{q}_i$}.
Recall that a projective transformation is a map
$$ \phi: \RR^d \to \RR^d, \,\,\, 	\xx  \mapsto \frac{B\xx+\mathbf{b}}{\mathbf{c}^\top \xx + \gamma}$$
for some $B\in \RR^{d \times d}$, $\mathbf{b,c}\in \RR^d$, $\gamma\in \RR$ such that
\begin{equation} \det\left[\begin{array}{cc} B & \mathbf{b} \\ \mathbf{c}^\top & \gamma \end{array}\right] \neq 0. \end{equation}
The polytopes $P$ and $Q = \phi(P)$ are combinatorially equivalent. Projective equivalence within a combinatorial class can be characterized in terms of slack matrices.

\begin{lemma}[{\cite[Corollary 1.5]{GPRT17}}] Two {labelled} polytopes $P$ and $Q$ are projectively equivalent if and only if $D_vS_PD_f$ is a slack matrix of $Q$ for some positive diagonal matrices $D_v,D_f$. \label{lem:PEscaling} \end{lemma}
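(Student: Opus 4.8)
The plan is to realize each projective transformation of $\RR^d$ as an invertible linear map on homogeneous coordinates and then track its effect on the factorization $S_P=[\mathbbm{1}\;V]\,M$, where $M$ is the $(d+1)\times f$ matrix whose $j$th column is $(w_j,-W_j)^\top$. A projective transformation $\phi(\xx)=(B\xx+\mathbf b)/(\mathbf c^\top\xx+\gamma)$ corresponds to the matrix $A=\left[\begin{smallmatrix}\gamma & \mathbf c^\top\\ \mathbf b & B\end{smallmatrix}\right]$, which is invertible exactly under the determinant hypothesis in the definition of $\phi$, and $A\binom{1}{\mb p_i}=\binom{\lambda_i}{B\mb p_i+\mathbf b}$ with $\lambda_i:=\mathbf c^\top\mb p_i+\gamma$. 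For $Q=\phi(P)$ to even be a polytope combinatorially equivalent to $P$, the critical hyperplane $\{\mathbf c^\top\xx+\gamma=0\}$ must miss $P$; since $P$ is connected, after replacing $A$ by $-A$ (which does not change $\phi$) I may assume every $\lambda_i>0$. Dividing row $i$ of $[\mathbbm{1}\;V]\,A^\top$ by $\lambda_i$ then produces exactly the homogenized vertex $(1,\phi(\mb p_i)^\top)$ of $Q$, so $[\mathbbm{1}\;V_Q]=D_v^{-1}[\mathbbm{1}\;V]\,A^\top$ with $D_v=\diag(\lambda_1,\dots,\lambda_v)$. For the forward direction I would next check that $A^{-\top}M$ (with $A^{-\top}:=(A^{-1})^\top$) is a facet matrix of $Q$: the slack it assigns to the $i$th vertex of $Q$ in facet $j$ equals $(1,\phi(\mb p_i)^\top)\,A^{-\top}(w_j,-W_j)^\top=\lambda_i^{-1}(1,\mb p_i^\top)(w_j,-W_j)^\top=\lambda_i^{-1}(S_P)_{ij}$, which is nonnegative and vanishes precisely when $\mb p_i$ lies on facet $j$ of $P$, so these are the facet inequalities of $Q$ up to positive scaling. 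Multiplying out then gives $S_Q=[\mathbbm{1}\;V_Q]\,A^{-\top}M=D_v^{-1}[\mathbbm{1}\;V]\,M=D_v^{-1}S_P$; hence $D_v^{-1}S_P$, and therefore $D_v^{-1}S_PD_f$ for any positive diagonal $D_f$ (column scalings of a slack matrix are again slack matrices of the same polytope, as noted before Lemma~\ref{LEM:rowrealiz}), is a slack matrix of $Q$.

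For the converse, suppose $D_vS_PD_f$ is a slack matrix of $Q$, and set $\lambda_i:=(D_v)_{ii}^{-1}>0$. By Theorem~\ref{THM:slackconditions}, $\mathbbm{1}$ lies in the column span of $D_vS_PD_f$, which equals $D_v$ times the column span of $[\mathbbm{1}\;V]$; hence $(\lambda_1,\dots,\lambda_v)^\top=D_v^{-1}\mathbbm{1}=[\mathbbm{1}\;V]\binom{\gamma}{\mathbf c}$ for some $\gamma\in\RR$, $\mathbf c\in\RR^d$, i.e. $\lambda_i=\gamma+\mathbf c^\top\mb p_i$, so in particular $\gamma+\mathbf c^\top\xx>0$ on $P$. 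I would then complete the nonzero row vector $(\gamma,\mathbf c^\top)$ to an invertible $(d+1)\times(d+1)$ matrix $A$, read off the blocks $\mathbf b$ and $B$, and let $\phi$ be the corresponding projective transformation, which is admissible for $P$ (its critical hyperplane misses $P$) by the sign condition just obtained. Applying the forward computation to $\phi$ with these same $\lambda_i$ shows that $Q':=\phi(P)$ has $D_vS_P$, and hence also $D_vS_PD_f$, as a slack matrix. Now $Q$ and $Q'$ possess the common slack matrix $D_vS_PD_f$, so by Lemma~\ref{LEM:rowrealiz} both are affinely equivalent to $\conv(\textup{rows}(D_vS_PD_f))$ and therefore to each other; composing this affine equivalence with $\phi$ exhibits $Q$ as a projective image of $P$, completing the proof.

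The main obstacle is the homogenization bookkeeping in the forward step, in particular pinning down that $A^{-\top}M$ delivers the facet inequalities of $\phi(P)$ itself rather than merely some collection of valid inequalities, together with the care required around the critical hyperplane: that $\phi$ is genuinely only defined off that hyperplane, that admissibility forces the hyperplane to miss $P$, and that the normalization $\lambda_i>0$ can be arranged by rescaling $A$. Everything else is formal: the identity $\textup{colspan}(D_vS_PD_f)=D_v\,\textup{colspan}([\mathbbm{1}\;V])$, the stability of "being a slack matrix of a fixed polytope" under column scaling, and the closure of the class of projective maps under composition with affine maps. One could instead run the argument through the cone $\cone(\textup{rows}(S_P))$, on which row scalings act trivially and column scalings act by a linear automorphism, and whose transverse affine sections are exactly the polytopes recovered as row hulls of slack matrices of $P$ and are pairwise projectively equivalent; but writing down $A$ explicitly keeps the correspondence with the paper's definition of $\phi$ transparent.
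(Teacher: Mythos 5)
Your argument is correct. Note that the paper itself gives no proof of this lemma; it is quoted verbatim from \cite[Corollary~1.5]{GPRT17}, so there is nothing in-paper to compare against. Your homogenization bookkeeping checks out: the identity $[\mathbbm{1}\;V_Q]=D_v^{-1}[\mathbbm{1}\;V]A^\top$ with $\lambda_i=\mathbf c^\top\mb p_i+\gamma$, the verification that the columns of $A^{-\top}M$ give valid inequalities for $Q$ that are tight exactly on the vertex sets of the facets of $P$ (hence are facet inequalities of $Q$, since those vertex sets affinely span hyperplanes after applying the invertible $A^\top$), and the resulting factorization $S_Q=D_v^{-1}S_P$ together give the forward direction. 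The converse is the more delicate half, and your key observation --- that condition~\eqref{EQ:colspan} of Theorem~\ref{THM:slackconditions} forces $D_v^{-1}\mathbbm{1}$ into the column span of $[\mathbbm{1}\;V]$, so that the required row scalings $\lambda_i$ are realized by an affine functional $\gamma+\mathbf c^\top\xx$ that is positive on $P$ --- is exactly the right mechanism; completing $(\gamma,\mathbf c^\top)$ to an invertible $A$, running the forward computation, and then invoking Lemma~\ref{LEM:rowrealiz} to identify $Q$ with $\phi(P)$ up to affine equivalence closes the loop cleanly. This is essentially the same homogeneous-coordinates argument as in the cited reference; the only cosmetic difference from the paper's conventions is that your $A$ is the transpose-permuted block form of the matrix in the paper's definition of a projective transformation, which does not affect invertibility.
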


Notice that Lemma~\ref{lem:PEscaling} does not say that {\em every} positive scaling of rows and columns of $S_P$ is a slack matrix of a polytope projectively equivalent to $P$, but rather that there is {\em some} scaling of rows and columns of $S_P$ that produces a slack matrix of $Q$. In particular, condition~\eqref{EQ:colspan} of Theorem~\ref{THM:slackconditions} requires $\mathbbm{1}$ to be in the column span of the scaled matrix.  Not all row scalings will preserve $\mathbbm{1}$ in the column span. Regardless, we will be interested in all row and column scalings of slack matrices.

\begin{definition}
A {\em generalized slack matrix} of $P$ is any matrix of the form $D_v S_Q D_f$, where
$Q$ is a polytope that is combinatorially equivalent to $P$ and $D_v, D_f$ are diagonal matrices with positive entries on the diagonal. Let $\mathfrak{S}_P$ denote the set of all generalized slack matrices of $P$.
\end{definition}

\begin{theorem} \label{THM:generalized slack matrices satisfy (1) and (2)}
The set $\mathfrak{S}_P$ of generalized slack matrices of $P$ consists precisely of the nonnegative matrices that satisfy
conditions~\eqref{EQ:support} and~\eqref{EQ:rank} of Theorem~\ref{THM:slackconditions}.
\end{theorem}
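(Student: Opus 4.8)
The plan is to prove the two inclusions separately; the interesting one is a reduction to Theorem~\ref{THM:slackconditions}, while the other is essentially bookkeeping. For the inclusion $\mathfrak{S}_P \subseteq \{S \ge 0 : S \text{ satisfies } \eqref{EQ:support} \text{ and } \eqref{EQ:rank}\}$, take a generalized slack matrix $M = D_v S_Q D_f$ with $Q$ combinatorially equivalent to $P$. Then $M \ge 0$ because $S_Q \ge 0$ and $D_v, D_f$ have positive diagonals, and since left or right multiplication by an invertible diagonal matrix alters neither the zero-pattern nor the rank of a matrix, we get $\supp(M) = \supp(S_Q) = \supp(S_P)$ (the last equality because $P$ and $Q$, being combinatorially equivalent, have slack matrices with the same support) and $\rank(M) = \rank(S_Q) = d+1$.

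For the reverse inclusion, let $S \ge 0$ satisfy $\supp(S) = \supp(S_P)$ and $\rank(S) = d+1$. The idea is to rescale the rows of $S$ so that the all-ones vector enters its column span, at which point Theorem~\ref{THM:slackconditions} identifies the rescaled matrix as a slack matrix of some realization of $P$, and then to undo the rescaling. Concretely, I would put $w := S\mathbbm{1} \in \RR^v$ and $D_v := \diag(w)^{-1}$; then $(D_v S)\mathbbm{1} = \mathbbm{1}$, so $\mathbbm{1}$ lies in the column span of $D_v S$, and $D_v S$ inherits the support $\supp(S_P)$ and the rank $d+1$ from $S$. Theorem~\ref{THM:slackconditions} then guarantees that $D_v S$ is a slack matrix of a polytope $Q$ combinatorially equivalent to $P$, and therefore $S = \diag(w)\,(D_v S)\,I_f$ exhibits $S$ as an element of $\mathfrak{S}_P$.

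The one point requiring care --- and the step I expect to be the main obstacle --- is verifying that $D_v$ is a genuine \emph{positive} diagonal matrix, i.e.\ that $w = S\mathbbm{1}$ has strictly positive entries. Since $S \ge 0$, the $i$th coordinate of $w$ is the sum of the entries in the $i$th row of $S$ and is positive exactly when that row is nonzero; by the support hypothesis this reduces to the claim that no row of $S_P$ vanishes, equivalently that no vertex of $P$ lies on every facet hyperplane. I would dispatch this with the observation that if a vertex $\mb{p}$ of $P$ satisfied every facet inequality with equality, then for each $x \in P$ the whole ray $\{\mb{p} + t(x - \mb{p}) : t \ge 0\}$ would be contained in $P$, making $P$ unbounded unless $P = \{\mb{p}\}$, contradicting $\dim P = d \ge 1$. (As a byproduct this reproves that every slack matrix of a polytope has no zero row.) Everything else in the argument is routine once Theorem~\ref{THM:slackconditions} is granted.
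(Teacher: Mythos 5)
Your proof is correct and follows essentially the same route as the paper: the forward inclusion is immediate from the fact that positive diagonal scalings preserve nonnegativity, support, and rank, and the reverse inclusion rescales each row of $S$ by its row sum to force $\mathbbm{1}$ into the column span and then invokes Theorem~\ref{THM:slackconditions}. Your extra justification that no row sum vanishes (via the unbounded-ray argument showing no vertex lies on every facet) fills in a detail the paper only asserts in passing.
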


\begin{proof}
By construction, every matrix in $\mathfrak{S}_P$ satisfies  conditions~\eqref{EQ:support} and~\eqref{EQ:rank} of Theorem~\ref{THM:slackconditions}.
To see the converse, we need to argue that if $S$ is a nonnegative matrix that
satisfies conditions~\eqref{EQ:support} and~\eqref{EQ:rank} of Theorem~\ref{THM:slackconditions}, then there exists some $D_v,D_f$ such that $S = D_v S_Q D_f$ for some polytope $Q$ that is combinatorially equivalent to $P$, or equivalently, that there is some row scaling of $S$ that turns it into a slack matrix of a polytope combinatorially equivalent to $P$. By Theorem~\ref{THM:slackconditions}, this is equivalent to showing that $\mathbbm{1}$ lies in the column span of $D_v^{-1} S$.
Choose the diagonal matrix $D_v^{-1}$ so that $D_v^{-1} S$ divides each row of $S$ by the
sum of the entries in that row. Note that this operation is well-defined, as a row of all zeros would correspond to a vertex which is part of every facet. Then the sum of the columns of $D_v^{-1} S$ is $\mathbbm{1}$ making
$D_v^{-1} S$ satisfy all three conditions of Theorem~\ref{THM:slackconditions}. Therefore, by the theorem,
$D_v^{-1} S = S_Q$ for some polytope $Q$ in the combinatorial class of $P$.
\end{proof}

We illustrate the above results on a simple example.

\begin{example} \label{EG:quadslack}
Consider two realizations of a quadrilateral in $\RR^2$,
\begin{align*}
P_1 & = \conv\{(0,0),(1,0),(1,1),(0,1)\}, \textup{ and }  \\
P_2 & = \conv\{(1,-2),(1,2),(-1,2),(-1,-2)\},
\end{align*}
where $P_2 = \psi(P_1)$ for the affine transformation $\renewcommand{\arraystretch}{0.7}\psi(\xx) =
\begin{bmatrix} 0 & -2 \\ 4 & 0 \end{bmatrix}
\xx +
\begin{bmatrix}1\\-2\end{bmatrix}$. The most obvious choice of facet representation for $P_1$ yields the slack matrix
\begin{align*}
S_{P_1} =
\begin{bmatrix} 1 & 0 & 0 \\ 1 & 1 & 0 \\ 1 & 1 & 1 \\ 1 & 0 & 1 \end{bmatrix}
\begin{bmatrix} 0 & 1 & 1 & 0 \\ 0 & -1 & 0 & 1 \\ 1 & 0 & -1 & 0 \end{bmatrix}
& =
\begin{bmatrix} 0 & 1 & 1 & 0 \\ 0 & 0 & 1 & 1 \\ 1 & 0 & 0 & 1 \\ 1 & 1 & 0 & 0 \end{bmatrix},
\end{align*}
which, by calculating the effect of $\psi$ on the facets of $P_1$, one finds is the same as the slack matrix for $P_2$,
\begin{align*}
S_{P_2} & =
\begin{bmatrix} 1 & 1 & -2 \\ 1 & 1 & 2 \\ 1 & -1 & 2 \\ 1 & -1 & -2 \end{bmatrix}
\begin{bmatrix} \frac{1}{2} & \frac{1}{2} & \frac{1}{2} & \frac{1}{2} \\ -\frac{1}{2} & 0 & \frac{1}{2} & 0 \\ 0 & -\frac{1}{4} & 0 & \frac{1}{4}
\end{bmatrix}. 
\end{align*}

Since $P_2$ also contains the origin in its interior, we can scale each column of its $\mathcal{H}$-representation from above by 2 to obtain a slack matrix of the form $S_{P_2}^1$.
Finally, consider the following nonnegative matrix
$$S = \begin{bmatrix} 0 & 1 & 1 & 0 \\ 0 & 0 & 1 & 1 \\ 1 & 0 & 0 & 2 \\ 1 & 2 & 0 & 0 \end{bmatrix}.$$
Since $S$ satisfies all three conditions of Theorem~\ref{THM:slackconditions}, it must be the slack matrix of some realization of a quadrilateral. In fact, it is easy to check that $S$ is
the slack matrix of the quadrilateral with vertices $\{(0,0),(1,0),(2,1),(0,1)\}$. Since all quadrilaterals are projectively equivalent, by Lemma~\ref{lem:PEscaling} we must be able to obtain $S_{P_1}$ by scaling the columns and rows of $S$ and, in fact, multiplying its first column by $2$ and its last two rows by $1/2$ we recover $S_{P_1}$.
\end{example}

We now recall the {\em symbolic slack matrix} and {\em slack ideal} of $P$ which were defined in \cite{GPRT17}.
Given a $d$-polytope $P$, its {\em symbolic slack matrix} $S_P(\xx)$ is the sparse generic matrix obtained by replacing each nonzero entry of $S_P$ by a distinct variable. Suppose there are $t$ variables in $S_P(\xx)$. The {\em slack ideal} of $P$ is the saturation of the ideal generated by the $(d+2)$-minors of $S_P(\xx)$, namely
\begin{equation} I_P := \langle (d+2)\text{-minors of }S_P(\xx) \rangle :\left(\prod_{i=1}^t x_i\right)^\infty \subset \CC[\xx] := \CC[x_1, \ldots, x_t]. \label{EQ:slackidealdef} \end{equation}
The {\em slack variety} of $P$ is the complex variety
$\mathcal{V}(I_P) \subset \CC^t$. The saturation of $I_P$ by the product of all variables guarantees that there are no components
in $\mathcal{V}(I_P)$ that live entirely in coordinate hyperplanes.
If $\s \in \CC^t$ is a zero of $I_P$, then we identify it with the matrix
$S_P(\s)$.

\begin{lemma} \label{lem:generalized slack matrices are in the slack variety}
The set $\mathfrak{S}_P$ of generalized slack matrices is contained in the real part of the slack variety
$\mathcal{V}(I_P)$.
\end{lemma}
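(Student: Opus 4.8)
The plan is to reduce immediately to Theorem~\ref{THM:generalized slack matrices satisfy (1) and (2)} and then observe that the only extra condition imposed by passing to the \emph{saturated} ideal $I_P$ is automatically met at any point whose coordinates are all nonzero. So fix $S \in \mathfrak{S}_P$. By Theorem~\ref{THM:generalized slack matrices satisfy (1) and (2)}, $S$ is a nonnegative matrix with $\supp(S) = \supp(S_P)$ and $\rank(S) = d+1$. Because $S$ has exactly the same zero pattern as $S_P$, we may define a vector $\s \in \RR^t$ by letting its $i$th coordinate $s_i$ be the entry of $S$ in the position occupied by the variable $x_i$ in $S_P(\xx)$; then $S = S_P(\s)$, and every $s_i$ is a strictly positive real number, since the entries of $S_P$ in the variable positions (and hence the corresponding entries of $S$) are positive.

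Next I would use the rank condition. Since $\rank(S_P(\s)) = \rank(S) = d+1 < d+2$, every $(d+2)$-minor of $S_P(\s)$ vanishes, i.e.\ $\s$ is a common zero of the generators of the ideal $J := \langle (d+2)\text{-minors of }S_P(\xx)\rangle$ appearing in~\eqref{EQ:slackidealdef}. Now take an arbitrary $g \in I_P = J : \left(\prod_{i=1}^t x_i\right)^\infty$. By definition of saturation there is an integer $N \geq 0$ with $\left(\prod_{i=1}^t x_i\right)^N g \in J$, and evaluating this polynomial at $\s$ gives $\left(\prod_{i=1}^t s_i\right)^N g(\s) = 0$. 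Since each $s_i \neq 0$, we conclude $g(\s) = 0$. As $g$ was an arbitrary element of $I_P$, this shows $\s \in \mathcal{V}(I_P)$, and because $\s$ is real, $S = S_P(\s)$ lies in the real part of the slack variety, as claimed.

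There is no real obstacle in this argument; the only point that needs a little care is the bookkeeping in the first step, namely that the hypothesis $\supp(S) = \supp(S_P)$ is precisely what allows $S$ to be recorded as $S_P(\s)$ for a genuine point $\s \in \CC^t$ with all coordinates nonzero, so that the saturation by $\prod_{i=1}^t x_i$ does not discard $\s$. Equivalently, the last step can be phrased geometrically: $\mathcal{V}(I_P) = \overline{\mathcal{V}(J)\setminus\mathcal{V}\!\left(\prod_{i=1}^t x_i\right)}$, and $\s$ already lies in $\mathcal{V}(J)\setminus\mathcal{V}\!\left(\prod_{i=1}^t x_i\right)$, hence in its closure.
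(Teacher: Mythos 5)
Your proof is correct and follows the same route as the paper: the paper's proof simply invokes Theorem~\ref{THM:generalized slack matrices satisfy (1) and (2)} to get realness, support, and rank, and concludes membership in $\mathcal{V}(I_P)$. You have merely made explicit the one step the paper leaves implicit, namely that a point of $\mathcal{V}\bigl(\langle (d+2)\text{-minors}\rangle\bigr)$ with all coordinates nonzero survives the saturation by $\prod_{i=1}^t x_i$; that detail is handled correctly.
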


\begin{proof}
By Theorem~\ref{THM:generalized slack matrices satisfy (1) and (2)}, all matrices in
$\mathfrak{S}_P$ have real entries, support equal to $\supp(S_P)$, and
rank $d+1$. Therefore, $\mathfrak{S}_P$ is contained in the real part of $\mathcal{V}(I_P)$.
\end{proof}

To focus on ``true slack matrices'' of polytopes in the combinatorial class of $P$, meaning matrices that satisfy all conditions of Theorem~\ref{THM:slackconditions},
we define the {\em affine slack ideal}
\begin{equation} \widetilde{I}_P = \langle (d+2)\text{-minors of }[S_P(\xx)\;\mathbbm{1}] \rangle :\left(\prod_{i=1}^t x_i\right)^\infty  \subset \CC[\xx],\label{EQ:trueslackidealdef} \end{equation}
where $[S_P(\xx)\;\mathbbm{1}]$ is the symbolic slack matrix with a column of ones appended.
By construction, $\mathcal{V}(\widetilde{I}_P)$ is a subvariety of $\mathcal{V}(I_P)$.

\begin{definition} Let $\widetilde{\mathfrak{S}}_P$ denote the set of true slack matrices of polytopes in the combinatorial class of $P$, or equivalently, the set of all nonnegative matrices that satisfy the
three conditions of Theorem~\ref{THM:slackconditions}.
\end{definition}

\begin{lemma} \label{lem:true slack matrices are in the true slack variety}
The set $\widetilde{\mathfrak{S}}_P$ of true slack matrices is contained in the real part of $\mathcal{V}(\widetilde{I}_P)$.
\end{lemma}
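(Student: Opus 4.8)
The plan is to mirror the proof of Lemma~\ref{lem:generalized slack matrices are in the slack variety}, but now keeping track of the extra column of ones. First I would take an arbitrary matrix $S \in \widetilde{\mathfrak{S}}_P$. By definition, $S$ is nonnegative and satisfies all three conditions of Theorem~\ref{THM:slackconditions}: $\supp(S) = \supp(S_P)$, $\rank(S) = d+1$, and $\mathbbm{1}$ lies in the column span of $S$. The support condition guarantees that $S = S_P(\s)$ for a suitable point $\s \in \RR^t$ with strictly positive coordinates (each entry of $\s$ is a positive real, since every nonzero entry of $S_P$ has been replaced by a variable and the supports agree). So the only thing to check is that $\s \in \mathcal{V}(\widetilde{I}_P)$, i.e. that $\s$ is a zero of every element of $\widetilde{I}_P$.

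Next I would argue that $\s$ kills all the $(d+2)$-minors of $[S_P(\xx)\;\mathbbm{1}]$, i.e. all $(d+2)$-minors of the augmented matrix $[S\;\mathbbm{1}]$. Because $\mathbbm{1}$ lies in the column span of $S$ and $\rank(S) = d+1$, appending $\mathbbm{1}$ does not increase the rank, so $\rank([S\;\mathbbm{1}]) = d+1$. Hence every $(d+2)\times(d+2)$ submatrix of $[S\;\mathbbm{1}]$ is singular, so its determinant vanishes; that is, $\s$ lies in the variety of the ideal $J$ generated by the $(d+2)$-minors of $[S_P(\xx)\;\mathbbm{1}]$. Finally, since $\widetilde{I}_P = J : (\prod x_i)^\infty$ and $\s$ has all coordinates nonzero, $\s \in \mathcal{V}(J)$ together with $\prod s_i \neq 0$ forces $\s \in \mathcal{V}(\widetilde{I}_P)$: indeed for any $g \in \widetilde{I}_P$ there is some $N$ with $(\prod x_i)^N g \in J$, so $(\prod s_i)^N g(\s) = 0$, and since $\prod s_i \neq 0$ we get $g(\s) = 0$. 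As $\s$ is real, $S = S_P(\s)$ lies in the real part of $\mathcal{V}(\widetilde{I}_P)$, completing the proof.

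There is essentially no hard step here; it is a bookkeeping argument. The only point requiring a moment's care is the passage from the ideal of minors $J$ to its saturation $\widetilde{I}_P$ — one must invoke that saturation does not change the variety away from the coordinate hyperplanes, which is exactly why the positivity of $\s$ is used. Everything else is immediate from the definitions and from Theorem~\ref{THM:slackconditions}. I would present the argument in three short moves: (i) realize $S$ as $S_P(\s)$ with $\s > 0$ via the support condition; (ii) use the rank and column-span conditions to see that $\s$ annihilates the $(d+2)$-minors of the augmented symbolic matrix; (iii) saturate, using $\prod s_i \neq 0$, to conclude $\s \in \mathcal{V}(\widetilde{I}_P)$.
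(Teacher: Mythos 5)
Your proof is correct and follows the same route as the paper's: the support condition identifies $S$ with a positive point $\s$, the rank and column-span conditions give $\rank([S\;\mathbbm{1}]) = d+1$ so all $(d+2)$-minors of the augmented matrix vanish, and positivity of $\s$ handles the saturation. The paper's version is terser (it leaves the saturation step implicit), but the argument is the same.
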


\begin{proof} By definition, all elements $S\in\widetilde{\mathfrak{S}}_P$ have real entries and $\textup{supp}(S)= \textup{supp}(S_P)$. It remains to show that $\rank([S \; \mathbbm{1}])\leq d+1$. This follows immediately from the fact that $S$ satisfies properties \eqref{EQ:rank} and \eqref{EQ:colspan} of Theorem~\ref{THM:slackconditions}.
\end{proof}

\begin{example} \label{EG:quadideal}
For our quadrilateral $P_1$ from Example~\ref{EG:quadslack} and in fact any quadrilateral $P$
labelled in the same way as $P_1$, we have
$$S_{P}(\xx) = \begin{bmatrix}
0 & x_1 & x_2 & 0 \\
0 & 0 & x_3 & x_4 \\
x_5 & 0 & 0 & x_6  \\
x_7 & x_8 & 0 & 0
\end{bmatrix}.$$
Its slack ideal is
\begin{align*}
{I}_P & = \langle 4\text{-minors of }S_{P}(\xx)\rangle : \left(\prod_{i=1}^8 x_i\right)^\infty
		 = \langle  x_2x_4x_5x_8 - x_1x_3x_6x_7\rangle \subset \CC[x_1,\ldots,x_8].
\end{align*}
The affine slack ideal of $P$ is
\begin{align*}
\widetilde{I}_P  = \langle 4\text{-minors of }[S_{P}(\xx)\;\mathbbm{1}]\rangle : \left(\prod_{i=1}^8 x_i\right)^\infty
	& = \langle  x_1x_3x_6-x_2x_4x_8+x_2x_6x_8-x_3x_6x_8, \\[-7pt]
&\phantom{= \langle}x_2x_4x_5-x_2x_4x_7+x_2x_6x_7-x_3x_6x_7, \\
&\phantom{= \langle}x_1x_4x_5-x_1x_4x_7+x_1x_6x_7-x_4x_5x_8, \\
&\phantom{= \langle}x_1x_3x_5-x_1x_3x_7+x_2x_5x_8-x_3x_5x_8\rangle.
\end{align*}

Notice, for example, that the generalized slack matrix which corresponds to $\s = (2,2,2,1,8,2,2,1)$ is a zero of $I_P$ but not of $\widetilde{I}_P$ and indeed $\mathbbm{1}$ is not in the column span of $S_P(\s)$. \qed
\end{example}


\begin{example} In the above example, the slack ideal $I_P$ is the same as the ideal of minors before saturation. To see that saturation can have an effect on the ideal, we consider the example of the following symbolic slack matrix, \[
S_P(\xx)=\begin{bmatrix}
    0 &    x_1 &      0 &      0 &      0 &    x_2 &      0 \\
  x_3 &      0 &      0 &      0 &      0 &    x_4 &      0 \\
  x_5 &      0 &    x_6 &      0 &      0 &      0 &    x_7 \\
    0 &    x_8 &    x_9 &      0 &      0 &      0 & x_{10} \\
    0 &      0 &      0 &      0 & x_{11} &      0 & x_{12} \\
    0 &      0 &      0 & x_{13} & x_{14} & x_{15} &      0 \\
    0 &      0 & x_{16} & x_{17} &      0 &      0 &      0
\end{bmatrix}
\]
which belongs to the four-dimensional polytope with $f$-vector $(7,17,17,7)$ shown in \cite[line 3, Table 1]{GPRT17}.
The ideal of $6$-minors of the above symbolic slack matrix has 49 generators. 
All generators are all binomial except the following 4:
$$
\begin{array}{c}x_{2}x_{3}x_{7}x_{8}x_{13}x_{16}-x_{1}x_{4}x_{5}x_{10}x_{13}x_{16}-x_{1}x_{3}x_{7}x_{9}x_{15}x_{17}+x_{1}x_{3}x_{6}x_{10}x_{15}x_{17} \\
x_{2}x_{3}x_{6}x_{8}x_{12}x_{14}-x_{1}x_{4}x_{5}x_{9}x_{12}x_{14}+x_{1}x_{3}x_{7}x_{9}x_{11}x_{15}-x_{1}x_{3}x_{6}x_{10}x_{11}x_{15}\\
x_{4}x_{5}x_{10}x_{11}x_{13}x_{16}-x_{4}x_{5}x_{9}x_{12}x_{14}x_{17}+x_{3}x_{7}x_{9}x_{11}x_{15}x_{17}-x_{3}x_{6}x_{10}x_{11}x_{15}x_{17}\\
-x_{2}x_{7}x_{8}x_{11}x_{13}x_{16}+x_{2}x_{6}x_{8}x_{12}x_{14}x_{17}+x_{1}x_{7}x_{9}x_{11}x_{15}x_{17}-x_{1}x_{6}x_{10}x_{11}x_{15}x_{17}\end{array}.$$
After saturation, the slack ideal has only 9 generators, all of which are binomial, as listed below:
$$\begin{array}{lll} 
x_7x_9-x_6x_{10}, & x_4x_5x_{12}x_{14}-x_3x_7x_{11}x_{15},  & x_7x_{11}x_{13}x_{16}-x_6x_{12}x_{14}x_{17}, \\
x_2x_3x_6x_8-x_1x_4x_5x_9, & x_4x_5x_{13}x_{16}-x_3x_6x_{15}x_{17}, &x_2x_8x_{12}x_{14}-x_1x_{10}x_{11}x_{15}, \\ x_2x_3x_7x_8-x_1x_4x_5x_{10}, &x_2x_8x_{13}x_{16}-x_1x_9x_{15}x_{17}, & x_{10}x_{11}x_{13}x_{16}-x_9x_{12}x_{14}x_{17}\end{array}$$
We postpone further discussion of why such a simplification of the slack ideal will be beneficial until the end of the next section. 
\qed
\end{example}


%
%

\section{Realization spaces from Slack Varieties} \label{sec:RealSp}

Recall that a realization of a $d$-polytope $P \subset \RR^d$ is a polytope $Q$ that is combinatorially equivalent to $P$. A {\em realization space} of $P$ is, essentially, the set of all polytopes $Q$ which are realizations of $P$, or equivalently, the set of all ``geometrically distinct'' polytopes which are combinatorially equivalent to $P$.
We say ``essentially'' since it is typical to mod out by affine equivalence within the combinatorial class.

The standard construction of a realization space of $P = \conv\{\mb{p}_1,\ldots, \mb{p}_v\}$ is as follows (see \cite{RG96}). Fix an {\em affine basis} of $P$, that is, $d+1$ vertex labels $B = \{b_0,\ldots, b_d\}$ such that the vertices $\{\mb{p}_{b}\}_{b\in B}$ are necessarily affinely independent in every realization of $P$. Then the {\em realization space of $P$ with respect to $B$} is
$$\mathcal{R}(P,B) = \{\text{realizations $Q = \conv\{\mb{q}_1,\ldots,\mb{q}_v\}$ of $P$ with $\mb{q}_i=\mb{p}_i$ for all $i\in B$}\}.$$
Fixing an affine basis ensures that just one $Q$ from each affine equivalence class in the combinatorial class of $P$ occurs in $\mathcal{R}(P,B)$.

Realization spaces of polytopes are {\em primary basic semialgebraic sets}, that is, they are defined by finitely many polynomial equations and strict inequalities. Recording each realization $Q$ by its vertices, we can think of $\mathcal{R}(P,B)$ as lying in $\RR^{d \cdot v}$.
Two primary basic semialgebraic sets $X\subseteq\RR^m$ and $Y\subseteq\RR^{m+n}$ 
are {\em rationally equivalent} if there exists a homeomorphism $f:X\to Y$ such that both $f$ and $f^{-1}$ are rational functions.
The important result for us is that if $B_1,B_2$ are two affine bases of a polytope $P$, then $\mathcal{R}(P,B_1)$ and $\mathcal{R}(P,B_2)$ are rationally equivalent \cite[Lemma 2.5.4]{RG96}. Thus one can call $\mathcal{R}(P,B) \subset \RR^{d \cdot v}$, {\em the} realization space of $P$.

The main goal of this section is to construct models of realization spaces for $P$ from the slack variety $\VV(I_P) \subset \CC^t$ and affine slack variety $\VV(\widetilde{I}_P)$ defined in Section~\ref{sec:bg}.
Recall that we identify an element $\s$ in either variety with the
matrix $S_P(\s)$.
Then by Lemma~\ref{lem:generalized slack matrices are in the slack variety},
$\mathfrak{S}_P$,  the set of all generalized slack matrices of all polytopes in the combinatorial class of $P$, is contained in $\VV(I_P)$. Similarly, by Lemma~\ref{lem:true slack matrices are in the true slack variety}, $\widetilde{\mathfrak{S}}_P$, the set of all true slack matrices of polytopes in the combinatorial class of $P$, is contained in $\VV(\widetilde{I}_P)$.
In fact, $\mathfrak{S}_P$  is contained in the positive part of $\VV(I_P)$, defined as 
\begin{equation} \label{EQ:slackmodel} \VV_+(I_P) := \VV(I_P)\cap \RR^t_{>0}\end{equation}
and $\widetilde{\mathfrak{S}}_P$
is contained in the positive part of $\VV(\widetilde{I}_P)$ defined as
\begin{equation} \label{EQ:slackmodel} \VV_+(\widetilde{I}_P) := \VV(\widetilde{I}_P)\cap \RR^t_{>0}.\end{equation}

These positive spaces are going to lead to realization spaces of $P$. In order to get there, we first describe these sets more explicitly. We start with a well-known lemma, whose proof we include for later reference.

\begin{lemma} \label{lem:matrices have correct rank}
Let $S$ be a matrix with the same support as $S_P$. Then $\rank(S) \geq d+1$.
\end{lemma}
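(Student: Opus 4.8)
The plan is to produce, inside $S$, a $(d+1)\times(d+1)$ submatrix that is lower triangular with all diagonal entries nonzero. Such a matrix has determinant equal to the product of its diagonal entries, hence nonzero no matter what the remaining entries are, and this forces $\rank(S)\ge d+1$. The key observation is that whether such a submatrix exists depends only on the \emph{support} of $S$, which by hypothesis equals $\supp(S_P)$, and the support of a slack matrix records exactly the vertex--facet non-incidences of $P$: the $(i,j)$ entry vanishes precisely when vertex $i$ lies on facet $j$. So the argument is entirely combinatorial.

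Concretely, I would first fix a complete flag of faces of $P$,
\[
\emptyset = F_{-1} \subsetneq F_0 \subsetneq F_1 \subsetneq \cdots \subsetneq F_{d-1} \subsetneq F_d = P,
\qquad \dim F_i = i,
\]
which exists for every $d$-polytope. For each $i = 0,\dots,d$ I would pick a vertex $u_i$ of $P$ with $u_i \in F_i \setminus F_{i-1}$; this is possible since otherwise every vertex of $F_i$ would lie in $F_{i-1}$, forcing $F_i = \conv(\mathrm{vert}\,F_i) \subseteq F_{i-1}$ and contradicting $F_{i-1}\subsetneq F_i$. (In particular $u_0,\dots,u_d$ are distinct, because the flag is strict.) Next, for each $i=0,\dots,d$ I would pick a facet $G_i$ of $P$ with $F_{i-1}\subseteq G_i$ but $u_i \notin G_i$. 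This is the step that needs an external input: I would invoke the standard fact that a proper face of a polytope is the intersection of the facets containing it (and, for the case $i=0$ where $F_{-1}=\emptyset$, that the intersection of all facets of a $d$-polytope with $d\ge 1$ is empty, since otherwise all facet hyperplanes would pass through a common point and $P$ would be a bounded cone, i.e.\ a single point). Since $u_i \notin F_{i-1}$, the vertex $u_i$ cannot lie on every facet containing $F_{i-1}$, so a suitable $G_i$ exists.

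With these choices in hand, consider the $(d+1)\times(d+1)$ submatrix $M$ of $S$ with rows indexed by $u_0,\dots,u_d$ and columns indexed by $G_0,\dots,G_d$, in these orders. If $j<i$ then $u_j \in F_j \subseteq F_{i-1} \subseteq G_i$, so the $(u_j,G_i)$ entry of $S_P$, and hence of $S$, is $0$; and since $u_i\notin G_i$, the $(u_i,G_i)$ entry of $S_P$, and hence of $S$, is nonzero. Therefore $M$ is lower triangular with nonzero diagonal, so $\det M \ne 0$ and $\rank(S)\ge d+1$, as desired.

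I expect the only genuine obstacle to be the construction of the facets $G_i$; once the ``face $=$ intersection of facets containing it'' fact is available it is immediate, and everything else is just reading off zeros from the flag. It is worth emphasizing that the argument uses nothing about the numerical entries of $S$ — only its support — which is precisely why it applies uniformly to every $S$ with $\supp(S)=\supp(S_P)$; applied to $S=S_P$ itself, where we already know $\rank(S_P)=d+1$, it shows in addition that such a triangular $(d+1)$-submatrix is of the largest possible size.
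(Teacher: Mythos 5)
Your proof is correct and follows essentially the same strategy as the paper's: both extract from a complete flag of $P$ a $(d+1)\times(d+1)$ lower triangular submatrix with nonzero diagonal, which depends only on the support and hence forces $\rank(S)\ge d+1$. The only cosmetic difference is that the paper organizes the flag as successive intersections of the chosen facets, whereas you fix the flag first and then select the facets $G_i$; the substance is identical.
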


\begin{proof}
Consider a flag of $P$, i.e., a maximal chain of faces in the face lattice of $P$.
Choose a sequence of facets $F_0,F_1, \ldots, F_d$ so that the flag is
$$\emptyset \;= \; F_0\cap\cdots\cap F_d  \;\subset\; F_1\cap\cdots\cap F_d \;\subset\; \cdots \;\subset\; F_{d-1}\cap F_d \;\subset\; F_d \;\subset\; P.$$
Next choose a sequence of vertices so that $v_0 = F_1\cap\cdots\cap F_d$ is the $0$-face in the flag, making $v_0 \not \in F_0$. Then choose $v_1 \in F_2 \cap \dots \cap F_d$ but $v_1 \not \in F_1$, $v_2 \in F_3 \cap \dots \cap F_d$ but $v_2 \not \in  F_2$ and so on, until $v_{d-1} \in F_d$ but not in $F_{d-1}$. Finally, choose $v_d$ so that $v_d \not \in F_d$.
Then the $(d+1)\times(d+1)$ submatrix of $S_P$ indexed by the chosen vertices and facets is lower triangular with a nonzero diagonal, hence has rank $d+1$.

Now if $S$ is a matrix with $\supp(S) = \supp(S_P)$, $S$ will also have this lower triangular submatrix in it, thus $\rank(S) \geq d+1$.
\end{proof}

We remark that the vertices chosen from the flag in the above proof form a suitable affine basis to fix in the
construction of $\mathcal{R}(P,B)$.

\begin{theorem} \label{THM:description of positive slack variety}
The positive part of the slack variety, $\VV_+(I_P)$, coincides with $\mathfrak{S}_P$, the set of generalized slack matrices of $P$. Similarly, $\VV_+(\widetilde{I}_P)$ coincides with $\widetilde{\mathfrak{S}}_P$, the set of true slack matrices of $P$.
\end{theorem}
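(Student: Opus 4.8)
The plan is to prove both equalities by a double inclusion, where in each case one direction is already in hand and the reverse direction is the real content. For the first statement, Lemma~\ref{lem:generalized slack matrices are in the slack variety} together with the observation that generalized slack matrices have strictly positive entries on their support gives $\mathfrak{S}_P \subseteq \VV_+(I_P)$; similarly Lemma~\ref{lem:true slack matrices are in the true slack variety} gives $\widetilde{\mathfrak{S}}_P \subseteq \VV_+(\widetilde{I}_P)$. So the work is to show $\VV_+(I_P) \subseteq \mathfrak{S}_P$ and $\VV_+(\widetilde{I}_P) \subseteq \widetilde{\mathfrak{S}}_P$.

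First I would take $\s \in \VV_+(I_P)$ and examine the matrix $S := S_P(\s)$. Since $\s$ has all positive coordinates, $\supp(S) = \supp(S_P)$, so by Lemma~\ref{lem:matrices have correct rank} we have $\rank(S) \geq d+1$. On the other hand, $\s$ is a zero of the $(d+2)$-minor ideal (the saturation only shrinks the variety, and vanishing of the generators of the saturated ideal still forces vanishing of all $(d+2)$-minors at a point where no variable is zero — more carefully, one uses that at a point with all coordinates nonzero, membership in the saturated ideal is equivalent to membership in the original minor ideal, so every $(d+2)$-minor of $S$ vanishes). Hence $\rank(S) \leq d+1$, giving $\rank(S) = d+1$. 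Thus $S$ is a nonnegative matrix satisfying conditions~\eqref{EQ:support} and~\eqref{EQ:rank} of Theorem~\ref{THM:slackconditions}, so by Theorem~\ref{THM:generalized slack matrices satisfy (1) and (2)} it lies in $\mathfrak{S}_P$. This proves the first equality.

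For the second, I would take $\s \in \VV_+(\widetilde{I}_P)$. Again $S := S_P(\s)$ is nonnegative with $\supp(S) = \supp(S_P)$, and as above $\rank(S) = d+1$. The new ingredient is that all $(d+2)$-minors of the augmented matrix $[S \;\; \mathbbm{1}]$ vanish as well (same saturation argument at a point with nonzero coordinates), so $\rank([S\;\;\mathbbm{1}]) \leq d+1$; combined with $\rank(S) = d+1$ this forces $\rank([S\;\;\mathbbm{1}]) = d+1$, i.e.\ $\mathbbm{1}$ lies in the column span of $S$. Hence $S$ satisfies all three conditions of Theorem~\ref{THM:slackconditions}, so it is a true slack matrix and lies in $\widetilde{\mathfrak{S}}_P$.

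The main obstacle — really the only subtle point — is the passage from "$\s$ is a zero of the saturated ideal $I_P$ (resp.\ $\widetilde{I}_P$)" to "$\s$ is a zero of the unsaturated minor ideal." This is where positivity of $\s$ is essential: saturating by $\prod x_i$ can only add points lying in some coordinate hyperplane, and $\s \in \RR^t_{>0}$ avoids all of them, so $\VV(I_P) \cap \RR^t_{>0} = \VV(\langle (d+2)\text{-minors}\rangle) \cap \RR^t_{>0}$, and likewise for $\widetilde{I}_P$. I would state this explicitly as the first step, since everything else is a routine combination of Lemma~\ref{lem:matrices have correct rank}, Theorem~\ref{THM:slackconditions}, and Theorem~\ref{THM:generalized slack matrices satisfy (1) and (2)}.
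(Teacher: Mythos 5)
Your proposal is correct and follows essentially the same route as the paper: one inclusion from Lemmas~\ref{lem:generalized slack matrices are in the slack variety} and~\ref{lem:true slack matrices are in the true slack variety}, the reverse inclusion from Lemma~\ref{lem:matrices have correct rank} (rank lower bound), the vanishing of the $(d+2)$-minors (rank upper bound), Theorem~\ref{THM:generalized slack matrices satisfy (1) and (2)}, and for the affine case the minors of $[S\ \mathbbm{1}]$ forcing $\mathbbm{1}$ into the column span. One small correction to the step you flag as the ``only subtle point'': it is in fact immediate, since saturation only \emph{enlarges} the ideal, so the $(d+2)$-minors already lie in $I_P$ (resp.\ $\widetilde{I}_P$) and hence vanish at every point of $\VV(I_P)$ — your phrase ``saturating can only add points'' has the direction reversed (saturation shrinks the variety by discarding components in coordinate hyperplanes), although the equality $\VV_+(I_P)=\VV(\langle (d+2)\text{-minors}\rangle)\cap\RR^t_{>0}$ you state is nonetheless true.
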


\begin{proof}
We saw that $\mathfrak{S}_P \subseteq \VV_+(I_P)$ and by Theorem~\ref{THM:generalized slack matrices satisfy (1) and (2)}, $\mathfrak{S}_P$ is precisely the set of nonnegative matrices with the same support as $S_P$ and rank $d+1$.
On the other hand, if $\s \in \VV_+(I_P)$, then $S_P(\s)$ is nonnegative and $\supp(S_P(\s)) = \supp(S_P)$. Therefore, by Lemma~\ref{lem:matrices have correct rank}, $\rank(S_P(\s)) = d+1$. Thus,
$\VV_+(I_P) = \mathfrak{S}_P$.

We saw that  $\widetilde{\mathfrak{S}}_P \subseteq \VV_+(\widetilde{I}_P)$.
Also recall that $\VV_+(\widetilde{I}_P)$ is contained in $\VV_+(I_P)$. Therefore, by the first statement of the theorem, if $\s \in \VV_+(\widetilde{I}_P)$, then $S_P(\s)$ is nonnegative,
$\supp(S_P(\s)) = \supp(S_P)$ and $\rank(S_P(\s)) = d+1$. From the definition of $\widetilde{I}_P$, we have $\rank([S_P(\s)\ \mathbbm{1}]) \leq d+1$, so it follows that $\rank([S_P(\s)\ \mathbbm{1}]) = d+1$, or equivalently, $\mathbbm{1}$ lies in the column span of $S_P(\s)$. Therefore, the matrices in $\VV_+(\widetilde{I}_P)$ satisfy all three conditions of Theorem~\ref{THM:slackconditions}, hence $\VV_+(\widetilde{I}_P) =  \widetilde{\mathfrak{S}}_P$.
\end{proof}

Since positive row and column scalings of a generalized slack matrix of $P$ give another generalized slack matrix of $P$, we immediately get that $\VV_+(I_P)$ is closed under row and column scalings.
Similarly, $\VV_+(\widetilde{I}_P)$ is closed under column scalings.

\begin{corollary} \label{cor:scale}\
\begin{enumerate}
\item If $\s\in\VV_+(I_P)$, then $D_v\s D_f\in \VV_+(I_P)$, for all positive diagonal matrices $D_v,D_f$.
\item Similarly, if  $\s\in\VV_+(\widetilde{I}_P)$, then $\s D_f\in \VV_+(\widetilde{I}_P)$, for all positive diagonal matrices $D_f$.
\end{enumerate}
\end{corollary}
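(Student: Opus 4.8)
The plan is to deduce both parts directly from Theorem~\ref{THM:description of positive slack variety}, which identifies $\VV_+(I_P)$ with $\mathfrak{S}_P$ and $\VV_+(\widetilde{I}_P)$ with $\widetilde{\mathfrak{S}}_P$. Once these identifications are in hand, the statements reduce to elementary closure properties of the relevant matrix sets, and there is essentially no real obstacle; the only point requiring care is explained at the end.

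For part (1), I would observe that $\mathfrak{S}_P$ is manifestly closed under positive row and column scalings. Given $\s\in\VV_+(I_P)=\mathfrak{S}_P$ and positive diagonal matrices $D_v,D_f$, write (by definition of $\mathfrak{S}_P$) $S_P(\s)=D_v' S_Q D_f'$ for some polytope $Q$ combinatorially equivalent to $P$ and positive diagonal $D_v',D_f'$; then $D_v\s D_f = D_v S_P(\s) D_f = (D_vD_v')\,S_Q\,(D_f'D_f)$ is again a generalized slack matrix of $P$, so it lies in $\mathfrak{S}_P=\VV_+(I_P)$. (Equivalently, one can invoke Theorem~\ref{THM:generalized slack matrices satisfy (1) and (2)}: $D_v S_P(\s) D_f$ is nonnegative, has the same zero pattern as $S_P(\s)$ because scaling a row or column by a positive scalar does not change which entries vanish, and has rank $d+1$ because $D_v,D_f$ are invertible.)

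For part (2), the only additional thing to verify beyond part (1) is that column scaling preserves the property that $\mathbbm{1}$ lies in the column span. If $\s\in\VV_+(\widetilde{I}_P)=\widetilde{\mathfrak{S}}_P$, then by Theorem~\ref{THM:slackconditions} there is a vector $\mathbf c$ with $S_P(\s)\mathbf c=\mathbbm{1}$, and hence $\bigl(S_P(\s)D_f\bigr)\bigl(D_f^{-1}\mathbf c\bigr)=\mathbbm{1}$, so $\mathbbm{1}$ remains in the column span of $S_P(\s)D_f$. Together with the nonnegativity, support, and rank observations from part (1), this shows $\s D_f$ satisfies all three conditions of Theorem~\ref{THM:slackconditions}, hence $\s D_f\in\widetilde{\mathfrak{S}}_P=\VV_+(\widetilde{I}_P)$.

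The subtlety worth flagging — and the reason part (2) is stated for column scalings only — is that an arbitrary positive \emph{row} scaling need not preserve $\mathbbm{1}$ in the column span, as already noted after Lemma~\ref{lem:PEscaling}; so one must not include a $D_v$ factor in part (2). Apart from keeping track of this asymmetry, the argument is routine.
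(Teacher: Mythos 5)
Your proof is correct and follows the same route the paper takes: the paper justifies this corollary in the single sentence preceding it, by noting that $\mathfrak{S}_P=\VV_+(I_P)$ is closed under positive row and column scalings and that $\widetilde{\mathfrak{S}}_P=\VV_+(\widetilde{I}_P)$ is closed under column scalings, exactly as you argue via Theorem~\ref{THM:description of positive slack variety}. Your explicit verification that $\mathbbm{1}$ stays in the column span after a column scaling, and your flag of the row-scaling asymmetry, simply spell out what the paper leaves implicit.
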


Corollary~\ref{cor:scale} tells us that the groups $\RR_{>0}^v\times\RR_{>0}^f$ and $\RR^f_{>0}$ act on $\VV_+(I_P)$ and $\VV_+(\widetilde{I}_P)$, respectively, via multiplication by positive diagonal matrices. Modding out these actions is the same as setting some choice of variables in the symbolic slack matrix to $1$, which means that we may choose a representative of each equivalence class (affine or projective) with ones in some prescribed positions.

\begin{corollary} \label{THM:realizespmodel}\
\begin{enumerate}
\item Given a polytope $P$, there is a bijection
between the elements of\break $\VV_+(I_P)/(\RR^v_{>0}\times\RR^f_{>0})$ and the classes of projectively equivalent polytopes of the same combinatorial type as $P$.
In particular, each class contains a true slack matrix.

\item Given a polytope $P$, there is a bijection
between the elements of $\VV_+(\widetilde{I}_P)/\RR^f_{>0}$ and the classes of affinely equivalent polytopes of the same combinatorial type as $P$.
\end{enumerate}
\end{corollary}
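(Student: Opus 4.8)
In both parts I would first replace the varieties by the matrix sets they equal: Theorem~\ref{THM:description of positive slack variety} gives $\VV_+(I_P)=\mathfrak S_P$ and $\VV_+(\widetilde I_P)=\widetilde{\mathfrak S}_P$, and Corollary~\ref{cor:scale} says these sets are stable under the respective diagonal group actions, so the quotients make sense. It then suffices to produce a bijection between $\mathfrak S_P/(\RR^v_{>0}\times\RR^f_{>0})$ and the projective equivalence classes of polytopes combinatorially equivalent to $P$, and between $\widetilde{\mathfrak S}_P/\RR^f_{>0}$ and the corresponding affine equivalence classes. In each case the strategy is the same: write down the obvious map on matrices, show it is well defined and constant on orbits, and then check surjectivity and injectivity using Lemma~\ref{lem:PEscaling}, Lemma~\ref{LEM:rowrealiz}, and the background facts on slack matrices of a fixed polytope.

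For (1), given $S\in\mathfrak S_P$, Theorem~\ref{THM:generalized slack matrices satisfy (1) and (2)} provides a positive diagonal $D_v$ with $D_v^{-1}S=S_Q$ a true slack matrix of some polytope $Q$ in the combinatorial class of $P$; set $\Phi(S):=[Q]$, its projective class. I would check: (a) \emph{independence of the normalization} --- if $E^{-1}S=S_{Q'}$ is also a true slack matrix then $S_{Q'}=(E^{-1}D_v)S_Q$ is a row scaling of $S_Q$, so $Q$ and $Q'$ are projectively equivalent by Lemma~\ref{lem:PEscaling} with trivial column scaling; (b) \emph{invariance under the group} --- if $S'=D_v'SD_f'$, then $S'$ is again a generalized slack matrix, so a normalization $F^{-1}S'=S_{Q''}$ is a row-and-column scaling of $S_Q$, and Lemma~\ref{lem:PEscaling} again gives $Q''$ projectively equivalent to $Q$; thus $\Phi$ descends to the quotient; (c) \emph{surjectivity} --- any $Q$ combinatorially equivalent to $P$ has a slack matrix with $\mathbbm 1$ in its column span, e.g. $S^1_Q$, which lies in $\mathfrak S_P$ and maps to $[Q]$; (d) \emph{injectivity} --- if $\Phi(S_1)=\Phi(S_2)$ with normalizations $S_{Q_i}=D_i^{-1}S_i$ and $Q_1$, $Q_2$ projectively equivalent, then Lemma~\ref{lem:PEscaling} gives $S_{Q_2}=D_vS_{Q_1}D_f$, so $S_2=(D_2D_vD_1^{-1})\,S_1\,D_f$ lies in the orbit of $S_1$. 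The ``in particular'' clause then follows because $S_Q=D_v^{-1}S$ is a true slack matrix lying in the orbit of $S$.

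For (2), Theorem~\ref{THM:description of positive slack variety} identifies the points of $\VV_+(\widetilde I_P)$ with the true slack matrices, each of which is, by Theorem~\ref{THM:slackconditions}, a slack matrix of a realization $Q$ of $P$; and by Lemma~\ref{LEM:rowrealiz} the affine class of $\conv(\textup{rows}(S))$ depends only on $S$, so sending $S$ to that affine class gives a well-defined map $\Psi$ from $\widetilde{\mathfrak S}_P$ to affine equivalence classes. The only new point is invariance under $\RR^f_{>0}$: by Corollary~\ref{cor:scale}(2) every column rescaling $S_QD_f$ again lies in $\widetilde{\mathfrak S}_P$, and the set of all such rescalings is exactly the set of all slack matrices of the single polytope $Q$, so $\Psi$ is constant on $\RR^f_{>0}$-orbits. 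Surjectivity holds because every affine class has a representative with $0$ in its interior and facets scaled so that $\mb w=\mathbbm 1$, whose slack matrix $S^1_Q$ lies in $\widetilde{\mathfrak S}_P$; and injectivity holds because $\Psi(S_1)=\Psi(S_2)$ forces $S_1$ and $S_2$ to be slack matrices of affinely equivalent polytopes, which share all their slack matrices, so $S_2=S_1D_f$ for some positive diagonal $D_f$.

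The part I expect to require the most care is the orbit-invariance of $\Phi$ in (1): a column rescaling can destroy the property that $\mathbbm 1$ lies in the column span, so one cannot simply ``transport'' a normalization along the group action but must re-normalize and then compare via Lemma~\ref{lem:PEscaling}. Everything else is bookkeeping with the results already established, in particular Theorem~\ref{THM:generalized slack matrices satisfy (1) and (2)} and Lemma~\ref{lem:PEscaling}.
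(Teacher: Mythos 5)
Your proposal is correct and follows exactly the route the paper intends: it combines Theorem~\ref{THM:description of positive slack variety} (identifying $\VV_+(I_P)$ with $\mathfrak S_P$ and $\VV_+(\widetilde I_P)$ with $\widetilde{\mathfrak S}_P$), Corollary~\ref{cor:scale}, Lemma~\ref{lem:PEscaling}, Lemma~\ref{LEM:rowrealiz}, and the row-normalization from the proof of Theorem~\ref{THM:generalized slack matrices satisfy (1) and (2)}, which is precisely the chain of results the paper cites (it leaves the bookkeeping implicit and only spells out the ``in particular'' clause). Your write-up simply makes the well-definedness, orbit-invariance, surjectivity, and injectivity checks explicit, and they are all sound.
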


The last statement in Corollary~\ref{THM:realizespmodel} (1) follows from the fact that every generalized slack matrix admits a row scaling that makes it satisfy all three conditions of Theorem~\ref{THM:slackconditions}, thereby making it a true slack matrix. An explicit example of such a scaling can be seen in the proof of Theorem~\ref{THM:generalized slack matrices satisfy (1) and (2)}. { Recall that slack matrices are only defined up to positive column scalings. Statement (2) above then follows from the fact that affinely equivalent polytopes have the same slack matrices.}

By the above results we have that $\VV_+(I_P)/(\RR^v_{>0}\times\RR^f_{>0})$ and $\VV_+(\widetilde{I}_P)/\RR^f_{>0}$ are parameter spaces for the projective (respectively, affine) equivalence classes of polytopes in the combinatorial class of $P$. Thus they can be thought of as realization spaces of $P$.

\begin{definition} \label{def:realization spaces}
Call $\VV_+(I_P)/(\RR^v_{>0}\times\RR^f_{>0})$ the {\em slack realization space} of the polytope $P$, and
$\VV_+(\widetilde{I}_P)/\RR^f_{>0}$ the {\em affine slack realization space} of the polytope $P$.
\end{definition}

We will see below that the affine slack realization space $\VV_+(\widetilde{I}_P)/\RR^f_{>0}$ is rationally equivalent to the
classical model of realization space $\mathcal{R}(P,B)$ of the polytope $P$.  On the other hand, our main object, the slack realization space $\VV_+(I_P)/(\RR^v_{>0}\times\RR^f_{>0})$, does not have an analog in the polytope literature.
This is partly because in every realization of $P$, fixing a projective basis does not guarantee that the remaining vertices in the realization are not at infinity. The slack realization space is a natural model for the realization space of projective equivalence classes of polytopes. {We note that in \cite{GPS17} the authors investigate the projective realization space of combinatorial hypersimplices and find an upper bound for its dimension. However they do not present an explicit model for it.}

\begin{theorem} \label{THM:truerealizequiv}
The affine slack realization space $\VV_+(\widetilde{I}_P)/\RR^f_{>0}$ is rationally equivalent to the
classical realization space $\mathcal{R}(P,B)$ of the polytope $P$.
\end{theorem}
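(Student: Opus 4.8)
The plan is to set up an explicit rational map in each direction between $\mathcal{R}(P,B)$ and $\VV_+(\widetilde{I}_P)/\RR^f_{>0}$ and check that the two composites are the identity. First I would observe that for a realization $Q = \conv\{\mb{q}_1,\dots,\mb{q}_v\}$ in $\mathcal{R}(P,B)$, we can extract the canonical slack matrix via the $\mathcal{H}$-representation of $Q$: the facet hyperplanes of $Q$ are determined rationally from the vertex coordinates (each facet inequality is obtained by solving a linear system coming from the $d$ vertices on that facet, which are prescribed combinatorially by $\supp(S_P)$), and then the slacks $w_j - W_j \mb{q}_i$ are rational in the $\mb{q}_i$. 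This gives a rational map $\alpha : \mathcal{R}(P,B) \to \widetilde{\mathfrak{S}}_P = \VV_+(\widetilde{I}_P)$ (using Theorem~\ref{THM:description of positive slack variety}), and composing with the quotient by $\RR^f_{>0}$ gives the forward map. The only subtlety in defining $\alpha$ is normalizing the scaling of each facet inequality; but since we then quotient by $\RR^f_{>0}$ this is irrelevant, so I would simply fix any convenient normalization (e.g. the one coming from a fixed choice of $\mb{w}$) to make $\alpha$ a genuine rational function.

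For the reverse direction, given $\s \in \VV_+(\widetilde{I}_P)$, Theorem~\ref{THM:description of positive slack variety} says $S_P(\s)$ is a true slack matrix, so by Lemma~\ref{LEM:rowrealiz} the polytope $\conv(\textup{rows}(S_P(\s)))$ is affinely equivalent to a realization of $P$; moreover since $\mathbbm{1}$ is in the column span, $S_P(\s)$ has a factorization $S_P(\s) = [\mathbbm{1} \; V] H$ with $H$ having $\mathbbm{1}$ as a row, recovering both a $\mathcal{V}$- and an $\mathcal{H}$-representation. The rows of $V$ give vertex coordinates of a realization $Q_\s$, but to land in $\mathcal{R}(P,B)$ I must apply the unique affine transformation taking the $B$-labeled vertices of $Q_\s$ to the prescribed $\{\mb{p}_b\}_{b\in B}$; this affine transformation is a rational function of the entries of $V$ (solve the linear system — here is exactly where the affine-independence of the $B$-vertices, guaranteed by the flag construction following Lemma~\ref{lem:matrices have correct rank}, is used, so the system is invertible), hence a rational function of $\s$. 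This is well-defined on the quotient $\VV_+(\widetilde{I}_P)/\RR^f_{>0}$: rescaling columns of $S_P(\s)$ by $D_f$ only rescales the facet normals $W$, leaving $V$ (and hence $Q_\s$) unchanged. Call this map $\beta : \VV_+(\widetilde{I}_P)/\RR^f_{>0} \to \mathcal{R}(P,B)$.

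Then I would verify $\alpha$ and $\beta$ are mutually inverse. Going $\mathcal{R}(P,B) \xrightarrow{\alpha} \cdot \xrightarrow{\beta} \mathcal{R}(P,B)$: from $Q$ we build $S_P(\alpha(Q))$, whose rows (by Lemma~\ref{LEM:rowrealiz} and the factorization) reconstruct $Q$ up to an affine map, and since $Q$ already has its $B$-vertices at $\{\mb{p}_b\}$, the normalizing affine transformation in $\beta$ is the identity, returning $Q$. Going the other way, $\beta$ produces a realization whose slack matrix is $S_P(\s)$ up to column scaling (affine maps preserve slack matrices, Lemma~\ref{LEM:rowrealiz} discussion), so $\alpha\circ\beta$ is the identity on $\VV_+(\widetilde{I}_P)/\RR^f_{>0}$. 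Both maps are rational by the constructions above, and both are continuous, so they are homeomorphisms, giving rational equivalence. The main obstacle I anticipate is bookkeeping the normalization/scaling ambiguities carefully — showing that $\beta$ genuinely descends to the quotient and that the affine-normalization step is rational and well-defined — rather than any deep new idea; the geometric content is entirely supplied by Theorem~\ref{THM:slackconditions}, Theorem~\ref{THM:description of positive slack variety}, and Lemma~\ref{LEM:rowrealiz}.
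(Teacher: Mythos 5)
Your overall strategy is the same as the paper's: the forward map sends $Q$ to a slack matrix, recovering the facet normals rationally by solving $V(j)\,\xx = \mathbbm{1}$, and the reverse map uses Lemma~\ref{LEM:rowrealiz} together with an affine normalization pinned down by the flag/affine basis. The one step where your argument has a genuine gap is the \emph{rationality} of the reverse map. You extract vertex coordinates from a factorization $S_P(\s) = [\mathbbm{1}\ V]H$ and then assert that the normalizing affine transformation is ``a rational function of the entries of $V$, hence a rational function of $\s$.'' But that factorization is far from unique (any affine change of coordinates on $\RR^d$ yields another one), so $V$ is not a well-defined, let alone rational, function of $\s$, and the chain of reasoning breaks there. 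It is true that the composite (factor, then normalize onto the fixed basis $\{\mb{p}_b\}_{b\in B}$) does not depend on the choice of factorization, but you still owe an argument that the composite itself is given by rational functions of $\s$.

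The paper closes exactly this gap with an explicit formula that avoids any factorization: it takes $Q' = \conv(\textup{rows}(S_Q)) \subset \RR^{f}$ directly from Lemma~\ref{LEM:rowrealiz}, restricts to the coordinates indexed by the flag columns $J$, and applies the linear map $\xx \mapsto \xx_J^\top \widehat{S}_Q^{-1}\mathcal{B}$, where $\widehat{S}_Q$ is the lower-triangular $(d+1)\times(d+1)$ flag submatrix and $\mathcal{B}$ records the fixed basis vertices. This is manifestly rational in the entries of $S_Q$ and sends the basis rows to the prescribed $\mb{p}_i$, which forces it to carry $Q'$ onto the representative of $Q$ in $\mathcal{R}(P,B)$. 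Note that working with $Q'\subset\RR^f$ one cannot speak of ``the unique affine transformation'' determined by $d+1$ points (an affine map $\RR^f\to\RR^d$ is underdetermined by its values on $d+1$ points); the restriction to the $J$-columns is precisely what restores uniqueness and an explicit rational formula. A minor further point on the forward map: solving $V(j)\,\xx=\mathbbm{1}$ presupposes that $0$ lies in the interior of every realization in $\mathcal{R}(P,B)$, which the paper arranges by first assuming $0\in\conv\{\mb{p}_b\}_{b\in B}$; with that adjustment your forward map is the paper's map $g(Q)=S_Q^1$.
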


\begin{proof} We will show that $\VV_+(\widetilde{I}_P)/\RR^f_{>0}$ is rationally equivalent to
$\mathcal{R}(P,B)$ for a particular choice of $B$. By \cite[Lemma 2.5.4]{RG96}, this is sufficient to show rational equivalence for any choice of basis.

We have already shown that realizations of $P$ modulo affine transformations are in bijective correspondence with the elements of both
$\VV_+(\widetilde{I}_P)/\RR^f_{>0}$ and $\mathcal{R}(P,B)$. So we just have to prove that this bijection induces a rational equivalence between these spaces, i.e., both the map and its inverse are rational.

We will start by showing the map sending a polytope in $\mathcal{R}(P,B)$ to its slack matrix is rational.
Fix a flag in $P$, as in the proof of Lemma~\ref{lem:matrices have correct rank}.
Suppose the sequence of vertices and facets chosen from the flag in the proof are indexed by the sets $I$ and $J$ respectively. The vertices $\{\mb{p}_i\}_{i\in I}$ are affinely independent, so that $B = I$ is an affine basis of $P$. Moreover, by applying an affine transformation to $P$, we may assume that $0$ is in the convex hull of $\{\mb{p}_i\}_{i\in I}$, hence is in the interior of every element of $\mathcal{R}(P,B)$.   Consider the map
\[
g:\mathcal{R}(P,B) \to \VV_+(\widetilde{I}_P), \,\,\,\,\,  Q \mapsto S_Q^1.
\]
The polytope $Q$ is recorded in $\mathcal{R}(P,B)$ by its list of vertices, which in turn are the rows of the matrix $V$. Also, recall that $S^1_Q = [\mathbbm{1}\; V]\begin{bmatrix} \mathbbm{1} \\ -W^\top \end{bmatrix}$. To prove that $g$ is a rational map, we need to show that the matrix of facet normals $W$ is a rational function of $V$. Since we know the combinatorial type of $P$, we know the set of vertices that lie on each facet. For facet $j$, let $V(j)$ be the submatrix of $V$ whose rows are the vertices on this facet.
Then the normal of facet $j$, or equivalently $W_j$, is obtained by solving the linear system
$V(j) \cdot \xx  = \mathbbm{1}$
which proves that $W_j$ is a rational function of $V$.
Then $\widetilde{g} = \pi\circ g$ is the desired rational map from $\mathcal{R}(P,B)$ to $\VV_+(\widetilde{I}_P)/\RR^f_{>0}$, where $\pi$ is the standard quotient map $\pi: \VV_+(\widetilde{I}_P)\to\VV_+(\widetilde{I}_P)/\RR^f_{>0}$. It sends the representative in $\mathcal{R}(P,B)$ of an affine equivalence class of polytopes in the combinatorial class of $P$ to the representative of that class in $\VV_+(\widetilde{I}_P)/\RR^f_{>0}$.

For the reverse map, we have to send a slack matrix $S_Q$ of a realization $Q$ of $P$ to the representative of its affine equivalence class in $\mathcal{R}(P,B)$.
We saw in Lemma~\ref{LEM:rowrealiz} that the rows of $S_Q$ are the vertices of a realization $Q'$ of $P$ that is affinely equivalent to $Q$. So
we just have to show that
$Q'$ can be rationally mapped to the representative of $Q$ in $\mathcal{R}(P,B)$. To do that, denote by $\widehat{S}_Q$ the $(d+1) \times (d+1)$ lower triangular submatrix of $S_Q$ from our flag, with rows indexed by $I$ and columns indexed by $J$. Then $(\widehat{S}_Q)^{-1}$ consists of rational functions in the entries of $\widehat{S}_Q$.
Let $\mathcal{B}$ be the $(d+1)\times d$ matrix whose rows are the vertices of $P$ indexed by $B$. Recall that these vertices are common to all elements of $\mathcal{R}(P,B)$, and in particular, they form an affine basis for the representative of $Q$ in $\mathcal{R}(P,B)$.
Then the linear map
$$\psi_{S_Q}:\RR^{f}\to \RR^d, \,\,\,\, \xx \mapsto  \xx_J^\top\widehat{S}_Q^{-1}\mathcal{B},$$
 where $\xx_J$ is the restriction of $\xx \in \RR^f$ to the coordinates indexed by $J$,
 is defined rationally in terms of the entries of $S_Q$, and maps row $i$ of $S_Q$ to the affine basis vertex $\mb{p}_i$, for all $i \in I$.
Now since $\psi_{S_Q}$ is a linear map, $\psi_{S_Q}(Q')$ is affinely equivalent to $Q'$ which is itself affinely equivalent to $Q$. Furthermore, $\psi_{S_Q}$ sends an affine basis of $Q'$ to the corresponding affine basis in $Q$, so in fact it must be a bijection between the two polytopes.
 Hence, $\psi_{S_Q}(\textup{rows of }S_Q)$ equals the representative of $Q$ in $\mathcal{R}(P,B)$, completing our proof.
\end{proof}

The slack realization space is especially elegant in the context of polarity. 
Let $P^\circ$ be the polar polytope of $P$. It is not immediately obvious from the standard model of a realization space, how $\mathcal{R}(P,B_1)$ and $\mathcal{R}(P^\circ,B_2)$ are related. In \cite{RG96}, it is shown that the realization spaces of $P$ and $P^\circ$ are {\em stably equivalent}, a coarser notion of equivalence than rational equivalence (see \cite[Definition 2.5.1]{RG96}); however, the proof of this fact in Theorem 2.6.3 is non-trivial.
Now consider the slack model. Recall we know that one slack matrix of $P^\circ$ is $(S_P^1)^\top$, so that $S_{P^\circ}(\xx) = S_P(\xx)^\top$. In particular, this means that $I_{P^\circ} = I_P$, so that the slack varieties and realization spaces of $P$ and $P^\circ$ are actually the same when considered as subsets of $\RR^t$. We simply need to interpret $\s\in\VV_+(I_P) = \VV_+(I_{P^\circ})$ as a realization of $P$ or $P^\circ$ by assigning its coordinates to $S_P(\xx)$ along rows or columns.

\begin{example} \label{EG:quadrealize} Let us return to the realization space of the unit square
$P_1$ from Example~\ref{EG:quadslack}. Suppose we fix the affine basis $B = \{1,2,4\}$, where we had $\mb{p}_1 = (0,0), \mb{p}_2 = (1,0)$ and $\mb{p}_4=(0,1)$. 
Then the classical realization space $\mathcal{R}(P_1,B)$ consists of all quadrilaterals $Q = \conv\{\mb{p}_1,\mb{p}_2,(a,b), \mb{p}_4\}$, where $a,b\in\RR$ must satisfy $a,b>0$ and $a+b>1$ in order for $Q$ to be convex.

In the slack realization spaces, modding out by row and column scalings is equivalent to fixing some
variables in $S_P(\xx)$ to $1$. So for example, we could start with the following scaled symbolic slack and affine slack matrices
$$S_P(\xx) = \begin{bmatrix}
0 & 1 & 1 & 0 \\
0 & 0 & 1 & 1 \\
1 & 0 & 0 & 1  \\
1 & x_8 & 0 & 0
\end{bmatrix}, \hspace{10pt}
[S_P(\xx)\;\mathbbm{1}] = \begin{bmatrix}
0 & 1 & 1 & 0 & 1 \\
0 & 0 & x_3 & x_4 & 1\\
1 & 0 & 0 & 1 & 1 \\
x_7 & x_8 & 0 & 0 & 1
\end{bmatrix}.$$
Computing the $4$-minors of these scaled symbolic slack matrices and saturating with all
variables produces the scaled slack ideals
\begin{align*}
I_P^{\textup{scaled}}& = \langle  x_8 - 1\rangle, \textup{ and }  \\
\widetilde{I}_P^{\textup{scaled}}  & = \langle  x_3x_8 + x_4x_8 - x_3 - x_8, \,x_4x_7 + x_4x_8 - x_4 - x_7, \,x_3x_7 - x_4x_8\rangle.
\end{align*}

Therefore the slack realization space, $\VV_+(I_P)/ ({\RR^4_{>0} \times \RR^4_{>0}})$, has the unique element $(1,1,1,1,1,1,1,1)$, and
indeed, all convex quadrilaterals are projectively equivalent to $P_1$. 

From the generators of  $\widetilde{I}_P^{\textup{scaled}}$ one sees that the affine slack realization space,
$\VV(\widetilde{I}_P)/ \RR^4_{>0}$, is two-dimensional and parametrized by $x_3,x_4$ with
$$ x_7 =  \frac{x_4}{x_3 + x_4 -1} \,\,\textup{ and } \,\,x_8 = \frac{x_3}{x_3 + x_4 -1}.$$
Since all the four variables have to take on positive values in a (scaled) slack matrix of a quadrilateral, we get that
the realization space
$\VV_+(\widetilde{I}_P)/\RR^4_{>0}$ is cut out by the inequalities $ x_3 >  0, \,\, x_4 > 0, \,\, x_3 + x_4 > 1$.
This description coincides exactly with that of $\mathcal{R}(P_1,B)$ that we saw earlier.
\label{EX:square}
 \end{example}

 \begin{example} \label{ex:vertex split of vertex sum}
Consider the $5$-polytope $P$ with vertices $\mb{p}_1,\ldots, \mb{p}_8$ given by
\begin{align*}
e_1,e_2,e_3,e_4,-e_1-2e_2-e_3,-2e_1-e_2-e_4,-2e_1-2e_2+e_5,-2e_1-2e_2-e_5
\end{align*}
where $e_1,\ldots, e_5$ are the standard basis vectors in $\RR^5$. It can be obtained by {splitting} the distinguished vertex $v$ of the vertex sum of two squares, $(\Box,v)\oplus(\Box,v)$ in the notation of \cite{McMull}. This polytope has 8 vertices and 12 facets and its symbolic slack matrix has the zero-pattern below
$$\begin{bmatrix}
0&*&0&0&0&0&*&0&0&0&0&0 \\
0&0&0&*&*&0&0&0&0&0&0&0 \\
0&0&0&0&0&*&*&*&0&0&*&* \\
*&0&0&*&0&*&0&0&*&0&*&0    \\
*&*&*&0&0&0&0&0&*&*&0&0    \\
0&0&*&0&*&0&0&*&0&*&0&*  \\
*&0&*&0&0&*&0&*&0&0&0&0 \\
0&0&0&0&0&0&0&0&*&*&*&*
\end{bmatrix}.$$
By \cite[Theorem 5.3]{McMull} $P$ is not projectively unique, meaning its slack realization space $\VV_+(I_P)/(\RR_{>0}^v\times\RR^f_{>0})$ will consist of more than a single point. Indeed, by fixing ones in the maximum number of positions, {marked in \red{bold} face below}, we find that $\VV_+(I_P)/(\RR_{>0}^v\times\RR^f_{>0})$ is a one-dimensional space of projectively inequivalent realizations parametrized by slack matrices of the following form
$$S_P(a) = \begin{bmatrix}
0&\bf{\red{1}}&0&0&0&0&\bf{\red{1}}&0&0&0&0&0 \\
0&0&0&\bf{\red{1}}&\bf{\red{1}}&0&0&0&0&0&0&0 \\
0&0&0&0&0&\bf{\red{1}}&\bf{\red{1}}&1&0&0&1&\bf{\red{1}} \\
\bf{\red{1}}&0&0&\bf{\red{1}}&0&\bf{\red{1}}&0&0&a&0&a&0    \\
\bf{\red{1}}&1&\bf{\red{1}}&0&0&0&0&0&{1}&1&0&0    \\
0&0&1&0&\bf{\red{1}}&0&0&\bf{\red{1}}&0&a&0&a  \\
1&0&1&0&0&1&0&\bf{\red{1}}&0&0&0&0 \\
0&0&0&0&0&0&0&0&\bf{\red{1}}&\bf{\red{1}}&\bf{\red{1}}&\bf{\red{1}}
\end{bmatrix}.$$
If we wish to look at a representative of each equivalence class which is a true slack matrix, then we can scale the above to guarantee that $\mathbbm{1}$ is in the column space.

\label{EQ:Amys5d}
\end{example}


In the next section we will use $I_P$ to obtain geometric results about the realization space $\VV_+(I_P )$. 
The positive part of the real variety of the ideal of $(d+2) \times (d+2)$ minors of $S_P({\bf x})$ coincides with $\VV_+(I_P)$. 
However, the variety of this determinantal ideal typically has 
a number of extraneous components that do not correspond to true polytopes. In particular, it may contain 
degenerate components contained entirely in coordinate hyperplanes, corresponding to matrices with too many 
zeroes to be slack matrices. The presence of such components results in an unnecessarily complicated algebraic description 
of the realization space. Saturating the determinantal ideal removes these degenerate parts, and the resulting simpler ideal 
$I_P$ more faithfully captures the geometry of the realization space. However, $I_P$ is still not necessarily the vanishing ideal 
of $\VV_+(I_P )$. Indeed, one can see in \cite[Section~5]{GMTWsecondpaper} that $\VV_+(I_P )$ need not be 
Zariski dense in $\VV(I_P )$. However, enlarging $I_P$ further would require much heavier and impractical computations, 
and it is unclear if it would have much impact on the actual algebraic description.

\section{Applications}
\label{sec:Apps}

In this section we illustrate the computational power of the slack ideal in answering three types
of questions that one can ask about realizations of polytopes. We anticipate further applications.

\subsection{Abstract polytope with no realizations}
Checking if an abstract polytopal complex is the boundary of an actual polytope is the classical
{\em Steinitz problem}, and an important ingredient in cataloging polytopes with few vertices. In \cite{alts85}, Altshuler and Steinberg enumerated all $4$-polytopes and $3$-spheres with $8$~vertices.
The first non-polytopal $3$-sphere in  \cite[Table 2]{alts85} has simplices and square pyramids as facets, and these facets have the following vertex sets
$$12345, 12346, 12578, 12678, 14568, 34578, 2357, 2367, 3467, 4678.$$

If there was a polytope $P$ with these facets, its symbolic slack matrix would be
$$S_P(\xx)=\begin{bmatrix}
    0  &        0 &       0 &       0 &       0 &   x_1   &    x_2  &    x_3   & x_4    & x_5      \\
    0  &        0 &       0 &       0 &     x_6 &   x_7   &      0  &      0   & x_8    & x_9      \\
    0  &        0 &  x_{10} &  x_{11} &  x_{12} & 0       &      0  &      0   &   0    & x_{13}   \\
    0  &        0 &  x_{14} &  x_{15} &  0      & 0       & x_{16}  & x_{17}   &   0    & 0        \\
    0  &   x_{18} &       0 &  x_{19} &  0      & 0       &      0  & x_{20}   & x_{21} & x_{22}   \\
x_{23} &        0 &  x_{24} &       0 &  0      & x_{25}  & x_{26}  &      0   &   0    & 0        \\
x_{27} &   x_{28} &       0 &  0      &  x_{29}      & 0       &      0  &      0   &   0    & 0        \\
x_{30} &   x_{31} &       0 &       0 &  0      & 0       & x_{32}  & x_{33}   & x_{34} & 0        \\
\end{bmatrix}.$$
One can compute that the would-be slack ideal $I_P$ in this case is trivial, meaning that there is no rank five matrix with the support of $S_P(\xx)$. In particular, there is no polytope with the given facial structure. In fact,
there is not even a hyperplane-point arrangement in $\RR^4$ or $\CC^4$ with the given incidence structure.

In some other cases, one can obtain non-empty slack varieties that have no positive part. A simple example of that behaviour can be seen in the {\em tetrahemihexahedron}, a polyhedralization of the real projective plane with $6$ vertices, and facets with vertex sets $235$, $346$, $145$, $126$, $2456$, $1356$, $1234$. Its slack matrix is therefore
$$S_P(\xx)=\begin{bmatrix}
   x_{1}& x_{2}&   0&   0&   x_{3}&    0&   0\\
   0&  x_{4}&  x_{5}&   0&    0&   x_{6}&   0\\
   0&   0&  x_{7}&  x_{8}&   x_{9}&    0&   0\\
 x_{10}&   0&   0& x_{11}&    0&  x_{12}&   0\\
   0& x_{13}&   0& x_{14}&    0&    0& x_{15}\\
 x_{16}&   0& x_{17}&   0&    0&    0& x_{18}
\end{bmatrix}.$$
Computing the slack ideal from the 5-minors of $S_P(\xx)$ we find that $I_P$ is generated by the binomials

\begin{small}
\begin{center}
\begin{tabular}{ccc}
$x_{8}x_{15}x_{17} + x_{7}x_{14}x_{18}$ &
$x_{4}x_{15}x_{17} + x_{5}x_{13}x_{18}$&
$x_{11}x_{15}x_{16} + x_{10}x_{14}x_{18}$\\
$x_{2}x_{15}x_{16} + x_{1}x_{13}x_{18}$&
$x_{5}x_{12}x_{16} + x_{6}x_{10}x_{17}$&
$x_{7}x_{11}x_{16} - x_{8}x_{10}x_{17}$\\
$x_{3}x_{7}x_{16} + x_{1}x_{9}x_{17}$&
$x_{2}x_{5}x_{16} - x_{1}x_{4}x_{17}$&
$x_{6}x_{11}x_{13} + x_{4}x_{12}x_{14}$\\
$x_{1}x_{11}x_{13} - x_{2}x_{10}x_{14}$&
$x_{5}x_{8}x_{13} - x_{4}x_{7}x_{14}$&
$x_{3}x_{8}x_{13} + x_{2}x_{9}x_{14}$\\
$x_{6}x_{7}x_{11} + x_{5}x_{8}x_{12}$&
$x_{3}x_{8}x_{10} + x_{1}x_{9}x_{11}$&
$x_{2}x_{6}x_{10} + x_{1}x_{4}x_{12}$
 \end{tabular}

 \begin{tabular}{cc}
$x_{6}x_{11}x_{15}x_{17} - x_{5}x_{12}x_{14}x_{18}$& $x_{2}x_{9}x_{15}x_{17} - x_{3}x_{7}x_{13}x_{18}$\\ 
$x_{4}x_{12}x_{15}x_{16} - x_{6}x_{10}x_{13}x_{18}$& $x_{3}x_{8}x_{15}x_{16} - x_{1}x_{9}x_{14}x_{18}$\\ 
$x_{2}x_{7}x_{14}x_{16} - x_{1}x_{8}x_{13}x_{17}$& $x_{5}x_{11}x_{13}x_{16} - x_{4}x_{10}x_{14}x_{17}$\\ 
$x_{2}x_{6}x_{9}x_{11} - x_{3}x_{4}x_{8}x_{12}$& $x_{2}x_{5}x_{8}x_{10} - x_{1}x_{4}x_{7}x_{11}$\\ 
$x_{3}x_{6}x_{7}x_{10} - x_{1}x_{5}x_{9}x_{12}$& 
$x_{3}x_{4}x_{7} + x_{2}x_{5}x_{9}$\\
 \end{tabular}
\end{center}
\end{small}

 Since the slack ideal contains binomials whose coefficients are both positive, it has no positive zeros. In fact, by fixing some coordinates to one, it has a unique zero up to row and column scalings, where all entries are either $1$ or $-1$.

\subsection{Non-prescribable faces of polytopes}

Another classical question about polytopes is whether a face can be freely prescribed in a realization of a polytope with given combinatorics.

We begin by observing that there is a natural relationship between the slack matrix/ideal of a polytope and those of each of its faces. For instance, if $F$ is a facet of a $d$-polytope $P$, a symbolic slack matrix $S_F(\xx)$ of $F$ is the submatrix of $S_P(\xx)$ indexed by the vertices of $F$ and the facets of $P$ that intersect $F$ in its $(d-2)$-dimensional faces. Let $\xx_F$ denote the vector of variables in that submatrix.
All $(d+1)$-minors of $S_F(\xx)$ belong to the slack ideal $I_P$. To see this,
consider a $(d+2)$-submatrix of $S_P(\xx)$ obtained by enlarging the given $(d+1)$-submatrix of $S_F(\xx)$ by a row indexed by a vertex $\mb{p} \not \in F$ and the column indexed by $F$. The column of $F$ in this bigger submatrix has all zero entries except in position $(\mb{p},F)$. The minor of this $(d+2)$-submatrix in $S_P(\xx)$ after saturating out the variable in position $(\mb{p},F)$, is the $(d+1)$-minor of $S_F(\xx)$ that we started with.
Therefore, $$I_F \subseteq I_P \cap \CC[\xx_F].$$
By induction on the dimension, this containment is true for all faces $F$ of $P$.

A face $F$ of a polytope $P$ is \textit{prescribable} if, given any realization of $F$, we can complete it to a realization of $P$. In our language, a face $F$ is prescribable in $P$ if and only if
$$\VV_+(I_F)=\VV_+(I_P \cap \CC[\xx_F]).$$

Consider the four-dimensional prism over a square pyramid, {for which it was shown in \cite{Barn87}
that its only cube facet $F$ is non-prescribable}. This polytope $P$ has $10$ vertices and $7$ facets and its symbolic slack matrix is
$$S_P(\xx)=\begin{bmatrix}
  \bf{\red{x_1}}& \bf{\red{0 }}    &   0   & \bf{\red{0 }}    & \bf{\red{x_{2}}} & \bf{\red{x_{3}}} & \bf{\red{ 0}}\\
  \bf{\red{x_{4}}}& \bf{\red{0 }}     &   0   & \bf{\red{0 }}    & \bf{\red{  0}}    & \bf{\red{x_{5}}} &\bf{\red{x_{6}}}\\
  \bf{\red{x_{7}}}& \bf{\red{0  }}    &   0   & \bf{\red{x_{8}}}&  \bf{\red{ 0}}    & \bf{\red{  0}}    &\bf{\red{x_{9}}}\\
  \bf{\red{x_{10}}}& \bf{\red{0 }}     &   0   & \bf{\red{x_{11}}}& \bf{\red{x_{12}}} & \bf{\red{  0}}    & \bf{\red{ 0}}\\
  x_{13}     & 0           & x_{14}&   0        &   0         &   0         &  0\\
  \bf{\red{0}}    & \bf{\red{x_{15}}}  &   0   & \bf{\red{0}}     & \bf{\red{x_{16}}} & \bf{\red{x_{17}}} & \bf{\red{ 0}}\\
  \bf{\red{0}}    & \bf{\red{x_{18}}}  &   0   & \bf{\red{0}}     & \bf{\red{  0}}    & \bf{\red{x_{19}}} &\bf{\red{x_{20}}}\\
  \bf{\red{0}}    & \bf{\red{x_{21}}}  &   0   & \bf{\red{x_{22}}}& \bf{\red{  0}}    &  \bf{\red{ 0}}    &\bf{\red{x_{23}}}\\
  \bf{\red{0}}    & \bf{\red{x_{24}} } &   0   & \bf{\red{x_{25}}}&\bf{\red{ x_{26}}} & \bf{\red{  0}}    & \bf{\red{ 0}}\\
  0         & x_{27}      &x_{28} &   0        &   0         &   0         &  0
\end{bmatrix}.$$
In \red{bold} we mark $S_F(\xx)$ sitting inside $S_P(\xx)$. Computing $I_P$ and intersecting with $\CC[\xx_F]$, we obtain an ideal
of dimension $15$. On the other hand, the slack ideal of a cube has dimension $16$, suggesting an extra degree of freedom for the realizations of a cube, and the possibility that the cubical facet $F$ cannot be arbitrarily prescribed in a realization of $P$. However, we need more of an argument to conclude this, since
$I_F \neq I_P \cap \CC[\xx_F]$ does not immediately mean that $\VV_+(I_F) \neq \VV_+(I_P \cap \CC[\xx_F])$.
We need to compute further to get Barnette's result.

We first note that one can scale the rows and columns of $S_F(\xx)$ to set $13$ of its $24$ variables to one, say
$x_1,x_2,x_3,x_4,x_6,x_7,x_8,x_{10},x_{15},x_{16}, x_{18},x_{21}, x_{24}.$ Guided by the resulting slack ideal we further set
$x_{20}=1, x_{11}=\frac{1}{2}, x_{17} = 2$ and $x_{25}=1$. Now solving for the remaining variables from the equations of the slack ideal, we get the following true slack matrix of a cube:
$$\begin{bmatrix}
1&0&0&1&1&0\\
1&0&0&0&2&1\\
1&0&1&0&0&1\\
1&0&1/2&1&0&0\\
0&1&0&1&2&0\\
0&1&0&0&3&1\\
0&1&3/2&0&0&1\\
0&1&1&1&0&0
\end{bmatrix}.$$
However, making the above-mentioned substitutions for
$$x_1,x_2,x_3,x_4,x_6,x_7,x_8,x_{10},x_{11},x_{15},x_{16}, x_{17},x_{18}, x_{20},x_{21}, x_{24},x_{25}$$
in $S_P(\xx)$ and eliminating $x_{13},x_{14},x_{27}$ and $x_{28}$ from the slack ideal results in the trivial ideal showing that the cube on its own admits further realizations than are possible as a face of $P$.

\subsection{Non-rational polytopes}
A combinatorial polytope is said to be \textit{rational} if it has a realization in which all vertices have rational entries. This has a very simple interpretation in terms of slack varieties.

\begin{lemma} \label{lem:rational}
A polytope $P$ is rational if and only if $\VV_+(I_P)$ has a rational point.
\end{lemma}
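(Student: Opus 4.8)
The plan is to prove both implications directly, moving between geometric realizations and points of the slack variety via the dictionary established in the previous sections.

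First I would prove the easy direction: if $P$ has a rational realization $Q$, then a slack matrix $S_Q$ of $Q$ has rational entries (it is built from the vertex matrix $V$ and facet matrix $W$, and $W$ is obtained by solving linear systems $V(j)\xx = \mathbbm{1}$ with rational data, as in the proof of Theorem~\ref{THM:truerealizequiv}). Since $S_Q$ is a true slack matrix of a polytope combinatorially equivalent to $P$, the corresponding point $\s\in\CC^t$ lies in $\widetilde{\mathfrak{S}}_P = \VV_+(\widetilde{I}_P) \subseteq \VV_+(I_P)$ by Theorem~\ref{THM:description of positive slack variety}, and $\s$ is rational. Hence $\VV_+(I_P)$ has a rational point.

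For the converse, suppose $\s\in\VV_+(I_P)\cap\QQ^t$. Then $S_P(\s)$ is a nonnegative rational matrix with $\supp(S_P(\s)) = \supp(S_P)$ and, by Lemma~\ref{lem:matrices have correct rank}, $\rank(S_P(\s)) = d+1$; that is, $S_P(\s)$ is a generalized slack matrix. I would apply the explicit row-scaling from the proof of Theorem~\ref{THM:generalized slack matrices satisfy (1) and (2)}: dividing each row of $S_P(\s)$ by its (positive, rational) row sum yields a rational matrix $S'$ that still has the correct support and rank and additionally has $\mathbbm{1}$ in its column span, so by Theorem~\ref{THM:slackconditions} it is the slack matrix of a polytope $Q$ in the combinatorial class of $P$. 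By Lemma~\ref{LEM:rowrealiz}, the rows of $S'$ are the vertices of a realization of $P$ affinely equivalent to $Q$, and these rows are rational. Thus $P$ has a rational realization.

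The only subtle point — really the one step that needs a word of care rather than a genuine obstacle — is confirming that all the constructions used preserve rationality: the row sums are rational and nonzero (nonzero because no row of a slack matrix can be all zeros, a vertex cannot lie on every facet), the linear systems determining facet normals have rational solutions when the vertices are rational, and $(\widehat{S}_Q)^{-1}$ has rational entries. None of these is hard; the proof is essentially a matter of checking that the bijections of Section~\ref{sec:RealSp} restrict to bijections on rational points.
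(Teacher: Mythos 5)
Your proof is correct and follows essentially the same route as the paper: a rational realization yields a rational slack matrix and hence a rational point of $\VV_+(I_P)$, while a rational point of $\VV_+(I_P)$ can be row-scaled by its (rational, positive) row sums to a true rational slack matrix whose rows give a rational realization via Lemma~\ref{LEM:rowrealiz}. The aside about $(\widehat{S}_Q)^{-1}$ is unnecessary for this lemma, but everything else matches the paper's (deliberately brief) argument.
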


The proof is trivial since any rational realization gives rise to a rational slack matrix and any rational slack matrix is itself a rational realization of the polytope $P$. Recall that any point in $\VV_+(I_P)$ can be row scaled to be a true slack matrix by dividing each row by the sum of its entries, so a rational point in $\VV_+(I_P)$ will provide a true rational slack matrix of $P$.

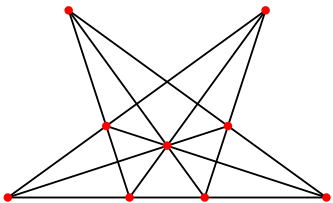
\begin{figure}
\begin{tikzpicture}[scale=0.5,line cap=round,line join=round,line width=.7pt]
\clip(-4.1,-0.4) rectangle (6.2,5.5);
{\tikzstyle{every node}=[circle,draw=red,fill=red,inner sep=0pt,minimum width=2.5pt]
\draw (0.,0.)-- (2.,0.);
\draw (2.,0.)-- (2.618033988749895,1.9021130325903064);
\draw (2.618033988749895,1.9021130325903064)-- (1.,3.077683537175253);
\draw (1.,3.077683537175253)-- (-0.6180339887498947,1.9021130325903073);
\draw (-0.6180339887498947,1.9021130325903073)-- (0.,0.);
\draw (-1.6180339887498945,4.979796569765561)-- (2.,0.);
\draw (3.6180339887498953,4.9797965697655595)-- (0.,0.);
\draw (5.236067977499787,0.)-- (-0.6180339887498947,1.9021130325903073);
\draw (-3.2360679774997907,0.)-- (2.618033988749895,1.9021130325903064);
\draw (-1.6180339887498945,4.979796569765561)-- (-0.6180339887498947,1.9021130325903073);
\draw (-1.6180339887498945,4.979796569765561)-- (1.,3.077683537175253);
\draw (1.,3.077683537175253)-- (3.6180339887498953,4.9797965697655595);
\draw (3.6180339887498953,4.9797965697655595)-- (2.618033988749895,1.9021130325903064);
\draw (2.,0.)-- (5.236067977499787,0.);
\draw (2.618033988749895,1.9021130325903064)-- (5.236067977499787,0.);
\draw (-3.2360679774997907,0.)-- (0.,0.);
\draw (-3.2360679774997907,0.)-- (-0.6180339887498947,1.9021130325903073);
\node at (0.,0.) {};
\node at (2.,0.) {};
\node at (2.618033988749895,1.9021130325903064) {};
\node at (-0.6180339887498947,1.9021130325903073) {};
\node at (-1.6180339887498945,4.979796569765561) {};
\node at (3.6180339887498953,4.9797965697655595) {};
\node at (5.236067977499787,0.) {};
\node at (-3.2360679774997907,0.) {};
\node at (1,1.38) {};}
\end{tikzpicture}
\caption{Non-rational line-point configuration}
\label{fig:pointconfig}
\end{figure}

Unfortunately, the usual examples of non-rational polytopes tend to be too large for direct computations, so we illustrate our point on the
non-rational point-line arrangement in the plane shown in Figure~\ref{fig:pointconfig} from 
\cite[Figure~5.5.1]{Grunbaum}.
A true non-rational polytope can be obtained from this point-line arrangement by Lawrence lifting. We will show the non-rationality of this configuration by computing its slack ideal as if it were a $2$-polytope. Its symbolic slack matrix is the $9\times 9$ matrix
$$S(\xx)=\begin{bmatrix}
 x_{1}&   {0}& x_{2}&   {0}& x_{3}& x_{4}& x_{5}& x_{6}& {0}  \\
 x_{7}& x_{8}& x_{9}&   {0}&x_{10}&   {0}&   {0}&x_{11}&x_{12}\\
x_{13}&x_{14}&   {0}&x_{15}&x_{16}&x_{17}&x_{18}&   {0}& {0}  \\
x_{19}&x_{20}&   {0}&x_{21}&   {0}&   {0}&x_{22}&x_{23}&x_{24}\\
x_{25}&   {0}&x_{26}&x_{27}&   {0}&x_{28}&   {0}&   {0}&x_{29}\\
   {0}&   {0}&x_{30}&x_{31}&x_{32}&   {0}&x_{33}&x_{34}&x_{35}\\
   {0}&x_{36}&   {0}&x_{37}&x_{38}&x_{39}&   {0}&x_{40}&x_{41}\\
   {0}&x_{42}&x_{43}&   {0}&x_{44}&x_{45}&x_{46}&   {0}&x_{47}\\
   {0}&x_{48}&x_{49}&x_{50}&   {0}&x_{51}&x_{52}&x_{53}& {0}
\end{bmatrix}.$$
One can simplify the computations by scaling rows and columns to fix $x_i=1$ for $i=1,2,8,14,20,26,30,36,42,44,47,48,50,51,52,53$, as this does not affect rationality. Then
one sees that the polynomial $x_{46}^2 + x_{46} - 1$ is in the slack ideal, so $x_{46}=\frac{-1\pm\sqrt{5}}{2}$, and there are no rational realizations of this configuration.

\begin{remark}
We note that as illustrated by the above example, the slack matrix and slack ideal constructions are not limited to the setting of polytopes, but in fact, are applicable to the more general setting of any point/hyperplane configuration. In particular, the slack realization space model can be extended to the setting of matroids as in \cite{BW18}.
\end{remark}

\bibliographystyle{alpha}
\bibliography{all}
\end{document}